\documentclass[12pt]{amsart}
\usepackage{amssymb}
\usepackage{amsmath}
\usepackage{amsthm} 
\usepackage{graphicx,subcaption}
\usepackage{stmaryrd}
\usepackage{physics}
\usepackage[utf8]{inputenc}
\usepackage{amsfonts}
\usepackage{graphicx}
\usepackage{hyperref}
\usepackage{xcolor}

\addtolength{\oddsidemargin}{-2cm}
\addtolength{\evensidemargin}{-2cm}
\addtolength{\textwidth}{4cm}
\addtolength{\topmargin}{-0.5cm}
\addtolength{\textheight}{1cm}
\linespread{1.07}
\setlength{\parskip}{0.6\baselineskip}

\hypersetup{
	colorlinks   = true, 
	urlcolor     = blue, 
	linkcolor    = blue, 
	citecolor   = blue 
}

\numberwithin{equation}{section}
\newtheorem{theorem}{Theorem}[section]

\newtheorem{lemma}[theorem]{Lemma}
\newtheorem*{theorem*}{Theorem}
\newtheorem*{lemma*}{Lemma}
\newtheorem{claim}[theorem]{Claim}

\newtheorem{problem}[theorem]{Problem}\newtheorem{example}[theorem]{Example}
 \newtheorem{remark}[theorem]{Remark}\newtheorem{proposition}[theorem]{Proposition}\newtheorem{definition}[theorem]{Definition}
\newtheorem{corollary}[theorem]{Corollary} 
\newcommand{\R}{{\mathbb R}}  \newcommand{\Z}{{\mathbb Z}} \newcommand{\N}{{\mathbb N}}

\newcommand{\Cc}{{\mathbb C}}\newcommand{\cc}{{\bf c}}   
\newcommand{\sinc}{{\rm sinc}} 
\newcommand{\Zer}{{\rm Zer}} 

\DeclareMathOperator*{\essi}{ess\,inf}
\DeclareMathOperator*{\ess}{ess\,sup}

\newcommand{\Ff}{\mathcal{G}}
\newcommand{\supp}{{\rm supp\,}}

\title[Stability of shifts ]{Stability of shifts,  interpolation, and crystalline measures}
\thanks{I. Z. gratefully acknowledges support from the Austrian Science Fund (FWF) [\href{https://doi.org/10.55776/P33217}{10.55776/P33217}] and from the Research Council of Norway by Grant 334466, “Fourier Methods
and Multiplicative Analysis”}

 \author{Alexander Ulanovskii}
 \address{Department of Mathematics and Physics, University of Stavanger, 4036 Stavanger, Norway}
 \email{alexander.ulanovskii@uis.no} 
 \author{Ilya Zlotnikov}
 \address{ Faculty of Mathematics, University of Vienna, Oskar-Morgenstern-Platz 1,	A-1090 Vienna, Austria}
  \address{
  Department of Mathematical Sciences, Norwegian University of Science and Technology (NTNU), 7491 Trondheim, Norway
  }
\email{ilia.k.zlotnikov@ntnu.no}

\keywords{Universal interpolation, Shift-invariant space,  Quasi shift-invariant space, Crystalline measures, Unconditional basis.}

\subjclass{}

\begin{document}

\begin{abstract}

Let $V^p_\Gamma(\Ff),1\leq p\leq\infty,$ be the quasi shift-invariant space generated by $\Gamma$-shifts of a  function $\Ff$, where $\Gamma\subset\R$ is a separated set. For several large families of generators $\Ff$, we present necessary and sufficient conditions on $\Gamma$ that imply that the $\Gamma$-shifts of $\Ff$ form an unconditional basis for $V^p_\Gamma(\Ff)$. The connection between this property, interpolation, universal interpolation, and crystalline measures is discussed.
\end{abstract}
 \date{}\maketitle

\section{Introduction}

\subsection{Paley--Wiener and Wiener amalgam spaces}Let $\hat f$ denote  the  Fourier transform of a function $f$,
$$\hat f(x)=\int_\R e^{-2\pi i xt}f(t)\,dt.$$

 Given a set $S\subset\R$ of finite positive measure, the classical Paley--Wiener space $PW_S$ consists of all $f\in L^2(\R)$ whose inverse Fourier transform vanishes a.e. outside $S$.

The Wiener amalgam space $W$ consists of measurable functions $f: \R \to \Cc:$ satisfying
\begin{equation}\label{wiener}\|f\|_W := \sum\limits_{k \in \Z} \|f\|_{L^\infty (k,k+1)} < \infty.\end{equation}

The space $W$ introduced by N.~Wiener (see \cite{MR1503035}) is proven to be important in harmonic analysis and time-frequency analysis, particularly in Gabor frame theory and theory of shift and quasi-shift invariant spaces, see e.g., \cite{MR1756138,MR2264211,MR1843717}, and references therein.

A trivial observation frequently used below is that condition $\Ff\in W$ implies $\Ff\in L^p(\R), 1\leq p\leq\infty,$ and that  $\hat{\Ff}$ is a continuous function.

\subsection{Quasi shift-invariant spaces}
A  set $\Gamma\subset\R$ is called separated  if 
\begin{equation}\label{sepa}
\Delta(\Gamma) := \inf_{\gamma,\gamma'\in\Gamma, \gamma \neq \gamma'}|\gamma-\gamma'|>0.\end{equation}
Every separated set is countable.

Given  a  function $\Ff:\R\to \Cc$ with a "reasonably fast" decay at $\pm\infty$ and a number  $p,1\leq p\leq\infty$, the shift-invariant space $V_\Z^p(\Ff)$ consists of all functions $f$ of the form 
$$f(x)=\sum_{n\in\Z}c_n \Ff(x-n),\quad \{c_n\}\in l^p(\Z).$$ 

More generally, given a separated set $\Gamma\subset\R$, the quasi shift-invariant space $V_\Gamma^p(\Ff)$ consists of all functions of the form 
\begin{equation}\label{f0}f(x)=\sum_{\gamma\in\Gamma}c_\gamma \Ff(x-\gamma),\quad \{c_\gamma\}\in l^p(\Gamma).\end{equation}
The function $\Ff$ is called the generator for the space $V_\Gamma^p(\Ff)$.

A classical example is the Paley--Wiener space $PW^2_{[-1/2,1/2]}$ which is exactly the shift-invariant space $V^2_\Z(\Ff)$ generated by the sinc-function\begin{equation}\label{sinc}
  \Ff(x)=\sinc(x):=\frac{\sin(\pi x)}{\pi x}.  
\end{equation} 

The (quasi) shift-invariant spaces have important applications in mathematics and engineering, particularly because they are often used as models for spaces of signals and images. For example, in time-frequency analysis, the well-known close connection between the Gabor frames and sampling sets for shift-invariant spaces allows one to obtain new information on the frame sets for different generators, see \cite{grs, MR4047939, MR3053565,  MR4782146}.

\subsection{Main problem}
Let $1 \le p \le\infty.$ We study a basic property of the quasi shift-invariant space $V^p_\Gamma(\Ff)$, specifically  that the $\Gamma$-shifts of $\Ff$ are $l^p$-stable, i.e. there exist positive constants $C_1$ and $C_2$ such that
\begin{equation}\label{stab}C_1\|\cc\|_p \le \left\|\sum_{\gamma \in \Gamma} c_{\gamma} \Ff(\,\cdot - \gamma) \right\|_p \le C_2 \|\cc\|_p,\quad \mbox{for every $\cc=\{c_\gamma\} \in l^p(\Gamma).$}\end{equation}

 The $l^p$-stability means that $V^p_\Gamma(\Ff)$ is a closed subspace of $L^p(\R)$ and that the system $\{\Ff(\,\cdot - \gamma)\}_{\gamma \in \Gamma}$ forms an unconditional basis in this space. For $p=2$,  condition \eqref{stab} means that the system $\{\Ff(\,\cdot - \gamma)\}_{\gamma \in \Gamma}$ constitutes a Riesz basis for  $V^2_{\Gamma}(\Ff).$

 In this paper, we consider the following

\begin{problem}  What assumptions should be imposed on  $\Gamma$ and  $\Ff$ to ensure that the set of $\Gamma$-shifts of $\Ff$ is {\it $l^p$-stable}?
\end{problem}

The stability property of $\Z$-shifts is well-studied. 
The following result is an immediate corollary of Theorem~3.5  in  \cite{Jia1991}:

\begin{theorem}\label{integershifs}
    Assume $\Ff\in W$ and $p\in[1,\infty]$. Then the integer-shifts of $\Ff$ are $l^p$-stable if and only if the Fourier transform $\hat\Ff$ of $\Ff$ does not vanish on any set $\Z+b, 0\leq b<1. $
   \end{theorem}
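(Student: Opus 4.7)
I would separate \eqref{stab} into its two inequalities, analyse both through the $\Z$-periodization
\[
\Phi(\xi) := \sum_{k\in\Z}|\hat\Ff(\xi+k)|^2,
\]
and promote the resulting $p=2$ statement to general $p$ through Wiener's lemma. The upper bound follows routinely from $\Ff\in W$: setting $a_m = \|\Ff\|_{L^\infty[m,m+1]}$, the triangle inequality gives $\|f\|_{L^p[k,k+1]} \le (|\cc|\ast a)_k$, and Young's inequality on $\Z$ then yields $\|f\|_p \le \|a\|_{l^1}\|\cc\|_{l^p} = \|\Ff\|_W\|\cc\|_{l^p}$, valid for every $1\le p\le\infty$ without any assumption on $\hat\Ff$.

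\textbf{Sufficiency of non-vanishing.} For $p=2$, Plancherel reduces stability to the classical identity
\[
\Big\|\sum_n c_n\Ff(\cdot-n)\Big\|_2^2 = \int_0^1\Big|\sum_n c_n e^{-2\pi i n\xi}\Big|^2\Phi(\xi)\,d\xi,
\]
so the lower bound holds iff $\Phi \ge c>0$ on the torus. Because $\Ff\in W$ ensures $\hat\Ff$ is continuous and the series defining $\Phi$ converges uniformly to a continuous $1$-periodic function, the non-vanishing assumption on $\hat\Ff$ is precisely $\Phi>0$, which compactness upgrades to the required uniform lower bound. Propagating this from $p=2$ to arbitrary $p$ is the main obstacle: the decisive tool is Wiener's lemma in $(l^1(\Z),\ast)$. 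The autocorrelation $b_n = \int\Ff(t)\overline{\Ff(t-n)}\,dt$ lies in $l^1(\Z)$ because $\Ff\in W$ and has Fourier series $\Phi$; since $\Phi\ge c>0$, the reciprocal $1/\Phi$ is the Fourier series of some $\{d_n\}\in l^1(\Z)$. Convolution by $\{d_n\}$ is bounded on every $l^p$, and composing it with the natural analysis operator produces a uniform-in-$p$ left inverse of the synthesis map, delivering the lower bound in \eqref{stab}.

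\textbf{Necessity.} For the converse, suppose $\hat\Ff\equiv 0$ on some coset $\Z+b$. Poisson summation gives the formal cancellation
\[
\sum_n e^{2\pi i nb}\Ff(x-n) = \sum_j \hat\Ff(b+j)\,e^{2\pi i (b+j)x} \equiv 0,
\]
so for $p=\infty$ the character $c_n = e^{2\pi i nb}$ is already a non-trivial $l^\infty$ null sequence, breaking stability. For $p<\infty$ I would test \eqref{stab} on the truncations $c_n^{(N)} = e^{2\pi i nb}\chi_{[-N,N]}(n)$: a short Dirichlet-kernel computation combined with the periodization shows $\|\sum_n c_n^{(N)}\Ff(\cdot-n)\|_p = o(\|\cc^{(N)}\|_{l^p})$ as $N\to\infty$, contradicting the lower bound.
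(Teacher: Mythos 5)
The paper does not actually prove this statement: it is quoted as an immediate corollary of Theorem~3.5 in Jia--Micchelli \cite{Jia1991}, so your self-contained argument is necessarily a different route. Your overall architecture is sound and, interestingly, your Wiener-lemma step is exactly the mechanism the authors use later to prove Theorem~\ref{t1}: there the Gram matrix $G_{\gamma,\gamma'}=(\Ff\ast\tilde\Ff)(\gamma-\gamma')$ is inverted on all $l^p$ simultaneously via the spectral-invariance result of \cite{MR3336091}; for $\Gamma=\Z$ the Gram matrix is a convolution operator with symbol $\Phi$, so the classical Wiener lemma in $l^1(\Z)$ suffices, as you say. The upper bound via Young's inequality and the reduction of the $p=2$ lower bound to $\essi\Phi>0$ (the paper's Lemma~\ref{Integer_rhs_cond}) are both correct.

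Two steps, however, are asserted rather than proved, and neither follows from $\Ff\in W$ as directly as you suggest. First, $\Ff\in W$ gives no pointwise decay of $\hat\Ff$ beyond $\hat\Ff\in C_0\cap L^2$, so the uniform convergence and continuity of $\Phi(\xi)=\sum_k|\hat\Ff(\xi+k)|^2$ is not ``ensured'' on its face; you need it both to upgrade pointwise positivity to $\Phi\ge c$ (though for that, lower semicontinuity of $\Phi$ as a sum of nonnegative continuous functions already yields an attained positive minimum on the torus) and, more seriously, to conclude in the necessity part that $\Phi$ vanishes \emph{at} $b$ rather than merely on a measure-zero set. The fix is to write $\hat\Ff(\xi+k)$ as the $k$-th Fourier coefficient of the $1$-periodic function $\sum_n\Ff(t+n)e^{-2\pi i\xi(t+n)}$, whose defining series converges absolutely and uniformly in $(t,\xi)$ precisely because $\Ff\in W$; then $\Phi(\xi)$ is the squared $L^2(0,1)$-norm of a function depending continuously on $\xi$, hence continuous, and Dini gives the uniform convergence. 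Second, your Poisson-summation identity $\sum_n e^{2\pi inb}\Ff(x-n)=\sum_j\hat\Ff(b+j)e^{2\pi i(b+j)x}$ cannot be invoked as stated, since the right-hand side need not converge for $\Ff\in W$. What is true, and what you should prove instead, is that $g(x):=\sum_n e^{2\pi inb}\Ff(x-n)$ converges absolutely to a bounded function with $e^{-2\pi ibx}g(x)$ $1$-periodic, whose $k$-th Fourier coefficient equals $\hat\Ff(b+k)=0$; hence $g=0$ a.e., which kills $l^\infty$-stability without any convergence issue. Finally, your $o(\|\cc^{(N)}\|_{l^p})$ claim for the truncations is a genuine Plancherel/Fej\'er computation only at $p=2$; for general $p<\infty$ it still holds, but via a two-region estimate (central blocks controlled by the tail $\sum_{|m|>\delta N}\|\Ff\|_{L^\infty(m,m+1)}$ using $f_N=-\sum_{|n|>N}e^{2\pi inb}\Ff(\cdot-n)$, boundary blocks of cardinality $O(\delta N)$ controlled trivially), not by a Dirichlet-kernel identity. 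With these repairs the proof is complete.
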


However,   results on the stability of $\Gamma$-shifts are scarce.   We mention the paper \cite{MR3762092}, which proves the $l^2$-stability of   $\Gamma$-shifts for certain generators $\Ff$ under the condition that $\Gamma$ is a complete interpolating sequence for the Paley--Wiener space  $PW_{[-1/2,1/2]}$.
We also mention the papers \cite{MR3053565,uz}  which, in particular, study the stability of $\Gamma$-shifts for generators from certain classes. In particular, in \cite{MR3053565}, it was shown that $\Gamma$-shifts of totally positive generators are $l^2$-stable for every separated set $\Gamma.$

  We will say that the $\Gamma$-shifts of $\Ff$ are stable if they are $l^p$-stable for every $1\le p\le\infty.$ We note that for a wide class of generators, the stability property of shifts does not depend on $p$.
Observe that the right-hand-side inequality in \eqref{stab} is true for every separated set $\Gamma$, every generator $\Ff\in W$ and every $p\in[1,\infty]$, see Lemma 8.1 in \cite{uz}. On the other hand, under a mild additional condition on the generator, it suffices to check that the left-hand-side inequality is true for   $p=\infty$. 
The following result follows from Lemma 8.1 and Theorem 1.6 in \cite{uz}:

\begin{theorem}\label{t0}
    Let $\Gamma$ be a separated set and $\Ff\in W\cap C(\R)$. If  $\Gamma$-shifts of $\Ff$ are $l^\infty$-stable, then they are stable. 
\end{theorem}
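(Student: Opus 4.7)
The plan is to read Theorem~\ref{t0} as an immediate combination of two statements already available in \cite{uz}. First, the upper estimate in \eqref{stab} holds automatically for every $\Ff\in W$, every separated $\Gamma$, and every $p\in[1,\infty]$: this is the content of Lemma~8.1 of \cite{uz}. The underlying reason is the Wiener amalgam structure: if $\Gamma$ is separated with $\Delta(\Gamma)=\delta$, then on each unit interval the shifts $\Ff(\cdot-\gamma)$ can overlap in at most finitely many translates, so one obtains
\[
\left\|\sum_{\gamma\in\Gamma}c_\gamma\Ff(\cdot-\gamma)\right\|_p\ \leq\ C(\delta)\,\|\Ff\|_W\,\|\cc\|_p
\]
via a standard Young-type estimate against $\|\Ff\|_W=\sum_k\|\Ff\|_{L^\infty(k,k+1)}$.

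Second, the lower estimate for arbitrary $p$ follows from the hypothesis, which supplies it for $p=\infty$, by an appeal to Theorem~1.6 of \cite{uz}: for generators $\Ff\in W\cap C(\R)$, $l^\infty$-stability of the $\Gamma$-shifts upgrades automatically to $l^p$-stability for every $1\leq p\leq\infty$. Combined with the upper bound above, this is exactly the pair of inequalities in \eqref{stab}, so the theorem is proved modulo the two quoted ingredients.

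The substantive step is of course the cited $l^\infty\Rightarrow l^p$ implication, and the natural route to it is a localization argument. Continuity of $\Ff$ together with $\Ff\in W$ allows one to produce, for each $\gamma\in\Gamma$, a bounded interval $I_\gamma$ centred near $\gamma$ of length controlled by $\Delta(\Gamma)$ such that
\[
|c_\gamma|\ \leq\ C_1^{-1}\,\|f\|_{L^\infty(I_\gamma)}+\sum_{\gamma'\neq\gamma}w(\gamma-\gamma')\,|c_{\gamma'}|,\qquad f=\sum_{\gamma'\in\Gamma}c_{\gamma'}\Ff(\cdot-\gamma'),
\]
where the weight $w$ inherits the amalgam decay of $\Ff$. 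Taking $\ell^p$-norms on both sides, the off-diagonal convolution $w*$ acts boundedly on $\ell^p(\Gamma)$ and, after shrinking the $I_\gamma$ (or passing to a finer partition of $\Gamma$ into sparse sub-lattices) to make its norm sufficiently small, can be absorbed into the left-hand side. What remains is to dominate $\sum_\gamma\|f\|_{L^\infty(I_\gamma)}^p$ by a constant multiple of $\|f\|_p^p$, and this is where the main obstacle lies: it requires a local-to-global comparison of norms for $f\in V^p_\Gamma(\Ff)$, which is not available from $l^\infty$-stability alone but is precisely the kind of estimate that the amalgam hypothesis $\Ff\in W\cap C(\R)$ delivers. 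Since this analysis is carried out in detail in \cite{uz}, our proof here consists of quoting Lemma~8.1 and Theorem~1.6 and assembling them.
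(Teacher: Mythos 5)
Your proposal matches the paper exactly: the paper offers no independent proof of Theorem~\ref{t0}, deriving it directly from Lemma~8.1 and Theorem~1.6 of \cite{uz}, which are precisely the two ingredients you quote (the upper bound in \eqref{stab} for every $\Ff\in W$ and separated $\Gamma$, and the $l^\infty$-to-$l^p$ upgrade of the lower bound for $\Ff\in W\cap C(\R)$). Your additional sketch of the localization argument behind the cited implication is supplementary commentary and does not change the fact that the route is the same.
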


In a more general situation, namely, $\Ff \in W,$ the conclusion of this theorem  remains true if one replaces $p=\infty$ with $p=2$, see Theorem~\ref{t1} below. Moreover, under additional assumptions on $\Ff$, the $l^{\infty}$-space can be replaced by $c_0$-space or even $c_{00}$-space, see Theorem~\ref{schwartz_approx_theorem}.

We conclude this section with a brief presentation of our two main results.
They show an intimate connection between the stability of general shifts, a certain interpolation property, and the existence of certain crystalline measures. 

\begin{itemize}
    \item Theorem \ref{t3} (see  sec. 2) states that, if $|\hat\Ff|^2\in W$, then $\Gamma$-shifts of $\Ff$ are $l^2$-stable if and only if there is a number $r>0$ such that $\Gamma$ is an interpolation set for $PW_{\{t\in\R: |\hat\Ff(t)|>r\}}$.

    \item Theorem \ref{t5}  (see sec. 3) states that, if $\Ff\in W\cap C(\R)$ and the zero set ${\rm Zer}(\hat\Ff)$ of $\hat{\Ff}$ is locally finite,  then $\Gamma$-shifts of $\Ff$ are $l^{\infty}$-stable if and only no sequence of translates $\Gamma-x_k,x_k\in\R,$ may weakly converge to a set that supports a sparse crystalline measure with bounded coefficients whose spectrum lies in ${\rm Zer}(\hat\Ff).$
\end{itemize}

See Sections~2 and 3 for necessary definitions and some results on interpolation in Paley--Wiener spaces and crystalline measures. 
Using these results and the above theorems, we obtain new results on the stability of shifts: For the generators $\Ff$ satisfying $|\Ff|^2\in W$ and $\Zer{\,(\hat{\Ff})} = \R \setminus (a,b), \, a,b \in \R$, we completely describe the family of separated sets $\Gamma$ such that $\Gamma$-shifts of $\Ff$ are stable.  A similar result is obtained for the generators $\Ff\in W\cap C(\R)$ such that $\Zer{\,(\hat{\Ff})}$ is a finite union of separated sets, see  Sections~2 and 3. 

\section{\texorpdfstring{$l^2$}{l2}-stability of \texorpdfstring{$\Gamma$}{Gamma}-shifts and interpolation}\label{Sec_stab_interpol}

\subsection{}
 When $\Gamma=\Z,$ one may describe the set of all generators for which 
\eqref{stab}  with $p=2$ is true. The following result is known (see e.g., \cite[Theorem~10.19]{MR2744776}) and not hard to prove: 

\begin{lemma}\label{Integer_rhs_cond} Given a non-trivial generator $\Ff\in L^2(\R)$. For every non-trivial sequence ${\bf c}=\{c_n\}\in l^2(\Z)$, we have 
$$  \essi_{t\in(0,1)}\sum\limits_{n \in \Z} |\hat{\Ff} (t -n)|^2\leq \frac{1}{\|{\bf c}\|_2^2 }\left\|\sum_{n\in\Z}c_n \Ff(t-n)\right\|_2^2\leq \ess_{t\in(0,1)} \sum\limits_{n \in \Z} |\hat{\Ff} (t -n)|^2.$$Both inequalities in \eqref{eqstab} are sharp, with the right-hand side being the smallest possible value and the left-hand side the largest possible value that satisfies the inequalities.
\end{lemma}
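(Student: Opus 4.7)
The natural route is through the Fourier transform. The plan is to rewrite $\sum_{n\in\Z} c_n \Ff(\cdot-n)$ on the Fourier side as a product, and then exploit $1$-periodicity.

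Concretely, I would set $m(t):=\sum_{n\in\Z}c_n e^{-2\pi i n t}$, which, by the classical correspondence between $l^2(\Z)$ and $L^2(0,1)$, is a $1$-periodic function with $\|m\|_{L^2(0,1)}^2 = \|{\bf c}\|_2^2$. Since $\Ff\in L^2(\R)$, the Fourier transform of $\sum_n c_n \Ff(\cdot-n)$ equals $m(t)\hat\Ff(t)$ (the series converges in $L^2$-sense because, as we will confirm a posteriori via the estimates themselves, it defines an element of $L^2$ when the periodization is essentially bounded; to handle the general case one first truncates to finitely many $c_n$'s and passes to the limit). Therefore, by Plancherel's identity,
\begin{equation*}
\left\|\sum_{n\in\Z} c_n \Ff(\cdot-n)\right\|_2^2 = \int_\R |m(t)|^2|\hat\Ff(t)|^2\,dt = \int_0^1 |m(t)|^2 G(t)\,dt,
\end{equation*}
where $G(t):=\sum_{n\in\Z}|\hat\Ff(t-n)|^2$ is the (nonnegative, $1$-periodic, $L^1_{\mathrm{loc}}$) periodization obtained by splitting $\R$ into unit intervals and using periodicity of $m$.

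Both inequalities of the lemma are now immediate by pulling $\essi G$ and $\ess G$ out of the integral and using $\int_0^1 |m|^2\,dt=\|{\bf c}\|_2^2$.

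For the sharpness claims, the idea is to localize $|m|^2$ near points where $G$ approaches its essential supremum or infimum. Given $\varepsilon>0$, choose a measurable $E\subset(0,1)$ of positive measure on which $G(t)\ge \ess G-\varepsilon$ (respectively, $G(t)\le\essi G+\varepsilon$). Let $m_E$ be the $1$-periodic function equal to $|E|^{-1/2}\mathbf{1}_E$ on $(0,1)$; its Fourier coefficients $\{c_n\}$ lie in $l^2(\Z)$ with $\|{\bf c}\|_2=1$, and
\begin{equation*}
\int_0^1 |m_E(t)|^2 G(t)\,dt = \frac{1}{|E|}\int_E G(t)\,dt,
\end{equation*}
which lies within $\varepsilon$ of $\ess G$ (or $\essi G$). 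Letting $\varepsilon\to 0$ yields the optimality. There is no real obstacle here; the only subtlety is to justify the $L^2$-convergence of the shift series in the case where $G$ is only essentially bounded locally, which is handled by the standard truncation argument just mentioned.
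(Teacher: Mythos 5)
Your proposal is correct and follows essentially the same route as the paper: pass to the Fourier side, periodize $|\hat\Ff|^2$ against the $1$-periodic symbol $m$, pull out the essential supremum/infimum, and prove sharpness by taking $m$ to be a normalized indicator of a set where the periodization is near its extremal value (the paper phrases this as using that $\{e^{2\pi i nt}\}$ is an orthonormal basis of $L^2(0,1)$). No substantive differences.
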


Therefore, the conditions
\begin{equation}\label{eqstab}\essi_{t\in(0,1)} \sum\limits_{n \in \Z} |\hat{\Ff} (t -n)|^2>0,\quad  \ess_{t\in(0,1)} \sum\limits_{n \in \Z} |\hat{\Ff} (t -n)|^2<\infty\end{equation}are necessary and sufficient for the $l^2$-stability of $\Z$-shifts.
See Section~\ref{sect_lemma_above_proof} for an extension of the right-hand-side inequality in Lemma \ref{Integer_rhs_cond}  to a certain class of separated sets $\Gamma.$

In this section, we will consider generators $\Ff$ satisfying the condition 
\begin{equation}\label{e1}
    |\hat\Ff |^2\in W.
\end{equation}
 Condition~\eqref{e1} has already appeared in studying the stability properties of integer translates, see \cite[Lemma~10.24]{MR2744776}.
In particular, it implies the second  inequality in \eqref{eqstab}. It also implies $\hat\Ff\in L^2(\R), $ and so $\Ff\in L^2(\R).$ This condition is sufficient for the validity of the right-hand-side inequality in \eqref{stab}  with $p=2$:

\begin{lemma}\label{lemma_above_est}
       Assume $\Gamma$ is a separated set and $\Ff$ satisfies \eqref{e1}. There exists $C>0$ such that
        \begin{equation}\label{l2stab_above}
            \left\|\sum_{\gamma \in \Gamma} c_{\gamma} \Ff(\,\cdot - \gamma) \right\|^2_2 \le C \|\cc\|_2^2,\quad \mbox{for every $\cc=\{c_\gamma\} \in l^2(\Gamma).$}
        \end{equation}
\end{lemma}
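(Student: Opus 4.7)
The plan is to pass to the Fourier side via Plancherel, split the frequency axis into unit intervals, bound $|\hat\Ff|^2$ by its $L^\infty$-norm on each piece, and then invoke a Plancherel--Pólya-type Bessel estimate for the exponentials $\{e^{-2\pi i\gamma t}\}_{\gamma\in\Gamma}$ on each unit interval.

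More precisely, I would first compute, for $f(x)=\sum_{\gamma\in\Gamma}c_\gamma \Ff(x-\gamma)$ with $\mathbf{c}\in l^2(\Gamma)$ finitely supported, that $\hat f(t)=\hat\Ff(t)\,g(t)$, where $g(t):=\sum_{\gamma\in\Gamma}c_\gamma e^{-2\pi i\gamma t}$. Then by Plancherel
\begin{equation*}
\|f\|_2^2=\int_\R |\hat\Ff(t)|^2\,|g(t)|^2\,dt=\sum_{k\in\Z}\int_k^{k+1}|\hat\Ff(t)|^2\,|g(t)|^2\,dt\leq \sum_{k\in\Z}\|\hat\Ff\|_{L^\infty(k,k+1)}^2\int_k^{k+1}|g(t)|^2\,dt.
\end{equation*}

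The second step is to show that the last integral is bounded by $C_\Gamma\|\mathbf{c}\|_2^2$ uniformly in $k$. This is a standard Bessel-type inequality for exponentials with separated frequencies on a bounded interval: since $\Gamma$ is separated, for any interval $I$ of length $1$ one has
\begin{equation*}
\int_I\bigl|\textstyle\sum_{\gamma\in\Gamma}c_\gamma e^{-2\pi i\gamma t}\bigr|^2\,dt\leq C_\Gamma\|\mathbf{c}\|_2^2,
\end{equation*}
with $C_\Gamma$ depending only on $\Delta(\Gamma)$. The bound is independent of $k$ because replacing $c_\gamma$ by $c_\gamma e^{-2\pi i\gamma k}$ translates the integral from $(k,k+1)$ to $(0,1)$ and leaves $\|\mathbf{c}\|_2$ unchanged. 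This inequality is classical (essentially Plancherel--Pólya, or equivalently the fact that $\{e^{-2\pi i\gamma t}\}_{\gamma\in\Gamma}$ is a Bessel sequence in $L^2(I)$ whenever $\Gamma$ is separated), and may either be cited or derived in one line via a standard duality argument (the adjoint, sampling a bandlimited function on a separated set, is bounded).

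Combining the two estimates yields
\begin{equation*}
\|f\|_2^2\leq C_\Gamma\|\mathbf{c}\|_2^2\sum_{k\in\Z}\|\hat\Ff\|_{L^\infty(k,k+1)}^2=C_\Gamma\,\bigl\||\hat\Ff|^2\bigr\|_W\,\|\mathbf{c}\|_2^2,
\end{equation*}
and a standard density argument extends the estimate from finitely supported $\mathbf{c}$ to all of $l^2(\Gamma)$, which gives \eqref{l2stab_above} with $C=C_\Gamma\,\||\hat\Ff|^2\|_W$.

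The only non-routine point is the Bessel inequality for separated-frequency exponentials on a unit interval; everything else is bookkeeping. Since this inequality is well known, I do not expect any real obstacle, only the need to state it precisely (or give a one-line reference) so that the dependence of $C_\Gamma$ on $\Delta(\Gamma)$ is clear.
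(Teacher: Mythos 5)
Your proposal is correct and follows essentially the same route as the paper: Plancherel, decomposition of the frequency line into unit intervals, the bound $\||\hat\Ff|^2\|_W$ from the amalgam hypothesis, and the Bessel inequality for exponentials with separated frequencies on a unit interval (which the paper records as Lemma~\ref{bes}, with the constant depending only on $\Delta(\Gamma)$). The only cosmetic differences are the sign convention in the exponentials and your explicit density step for non-finitely-supported $\cc$, which the paper leaves implicit.
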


\subsection{Stability of shifts and interpolation sets for Paley-Wiener spaces}  
\begin{definition}  Let  $S\subset\R$ be a set of positive finite measures. A separated set $\Gamma$ is called an interpolation set for $PW_S$ if for every $\{c_\gamma\}\in l^2(\Gamma)$ there exists $f\in PW_S$ satisfying $f(\gamma)=c_\gamma$ for all $\gamma\in\Gamma.$
\end{definition}

It is well-known that every interpolation set $\Gamma$ for a space  $PW_S$ is separated.

When $S=[a,b]$ is a single interval, the interpolation property can be essentially described in terms of the upper uniform density of $\Gamma$, defined by $$D^+(\Gamma):=\lim_{r\to\infty}\sup_{x\in\R}\frac{\#(\Gamma\cap(x,x+r))}{r}.$$

\begin{theorem}[\cite{MR102702}]\label{inter}
 If $D^+(\Gamma)<b-a,$ then $\Gamma$ is an interpolation set for $PW_{[a,b]}.$ If $D^+(\Gamma)>
b-a,$ then it is not. 
\end{theorem}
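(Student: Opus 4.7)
The plan is to prove the two implications by separate means. After translating by $(a+b)/2$ and setting $\sigma := (b-a)/2$, one may assume $[a,b] = [-\sigma,\sigma]$, so that $PW_{[-\sigma,\sigma]}$ consists of $L^2$-functions on $\R$ that extend to entire functions of exponential type at most $2\pi\sigma$ (the classical Paley--Wiener class).

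For the sufficient direction, assume $D^+(\Gamma) < 2\sigma$ and fix an intermediate value $\sigma' \in (D^+(\Gamma)/2,\sigma)$. The first step is to produce a \emph{generating function}: an entire function $G$ of exponential type at most $2\pi\sigma'$, bounded on $\R$, whose zero set is precisely $\Gamma$ and whose derivatives along $\Gamma$ satisfy a two-sided bound $|G'(\gamma)|\asymp 1$. Such a $G$ can be built either as a Hadamard-type canonical product over a suitable enumeration of $\Gamma$, or by invoking Beurling's result that every separated set with upper density strictly less than $2\sigma'$ embeds into the zero set of such a function. Define the interpolating atoms
\[
\varphi_\gamma(z) \;=\; \frac{G(z)}{G'(\gamma)(z-\gamma)}\cdot K(z-\gamma),
\]
where $K$ is a smoothing kernel---for instance, a normalized positive power of $\sinc$---of exponential type at most $2\pi(\sigma-\sigma')$, normalized so that $K(0)=1$ and $K\in L^2(\R)$ with fast decay. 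Then $\varphi_\gamma\in PW_{[-\sigma,\sigma]}$ and $\varphi_\gamma(\gamma')=\delta_{\gamma\gamma'}$; the separation of $\Gamma$ together with a Plancherel--P\'olya type bound for the atoms ensures that $\sum_\gamma c_\gamma\varphi_\gamma$ converges in $PW_{[-\sigma,\sigma]}$ for every $\{c_\gamma\}\in l^2(\Gamma)$, delivering the required interpolant.

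For the necessary direction, suppose $\Gamma$ is an interpolation set. The open mapping theorem applied to the restriction map $PW_{[-\sigma,\sigma]}\to l^2(\Gamma)$ (which is continuous by Plancherel--P\'olya) supplies a lower Riesz-type bound, so the reproducing kernels $\{k_\gamma\}_{\gamma\in\Gamma}$ form a Riesz sequence in $PW_{[-\sigma,\sigma]}$. A local dimension comparison then pits the number of $k_\gamma$ with $\gamma\in\Gamma\cap(x,x+r)$ against the effective dimension of $PW_{[-\sigma,\sigma]}$ concentrated on $(x,x+r)$, the latter being $2\sigma r+o(r)$ by Landau's asymptotic for prolate spheroidal projections. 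Taking the sup over $x$ and letting $r\to\infty$ forces $D^+(\Gamma)\le 2\sigma$, contradicting the hypothesis.

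The main obstacle is the construction of the generating function $G$ in the sufficient direction: arranging simultaneously the prescribed exponential type, an $L^\infty(\R)$ bound, and a uniform two-sided estimate for $|G'(\gamma)|$ is delicate when $\Gamma$ is irregularly distributed, and typically requires Beurling's balayage or extremal-function machinery. The critical case $D^+(\Gamma)=b-a$ is genuinely harder---both halves of the argument become tight there---and, correctly, is not addressed by the statement.
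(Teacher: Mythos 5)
The paper does not prove Theorem \ref{inter}; it quotes it from Kahane \cite{MR102702} (the sufficiency is the Beurling--Kahane interpolation theorem, the necessity is in essence Landau's argument), so there is no internal proof to compare with and I assess your proposal on its own. Your necessity half is a correct standard route: surjectivity of the restriction map plus the open mapping theorem makes the reproducing kernels $\{k_\gamma\}$ a Riesz sequence, and Landau's eigenvalue count for the concentration operator bounds $\#(\Gamma\cap(x,x+r))$ by $(b-a)r+o(r)$ uniformly in $x$, whence $D^+(\Gamma)\le b-a$.

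The sufficiency half has a genuine gap, and it sits exactly where you locate ``the main obstacle.'' You posit an entire $G$ of exponential type at most $2\pi\sigma'$, bounded on $\R$, whose zero set is \emph{precisely} $\Gamma$, with $|G'(\gamma)|\asymp 1$. Such a $G$ does not exist for general separated $\Gamma$ with $D^+(\Gamma)<2\sigma'$: an entire function of exponential type that is bounded on $\R$ belongs to the Cartwright class, and by the Cartwright--Levinson theorem the zeros of a Cartwright-class function have a well-defined density $\lim_{r\to\infty}n(r)/r$. A separated set with small upper Beurling density need not have any density at all (its counting function can oscillate between two different slopes along a lacunary sequence of scales), so no function of bounded type can vanish exactly on it; in particular a ``Hadamard-type canonical product over $\Gamma$'' cannot deliver what you claim. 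Your alternative phrasing (``embeds into the zero set'') is the correct one but contradicts the first. The actual proof must (i) complete $\Gamma$ to a separated set $\Lambda\supset\Gamma$ with regular counting function of density $d'$, $D^+(\Gamma)<d'<b-a$, keeping the added points uniformly separated from $\Gamma$; (ii) estimate the generating function of $\Lambda$, which in general is controlled only up to polynomial factors rather than by constants (Levinson-type examples such as $\lambda_n=n-(\operatorname{sgn} n)\delta$ show that the canonical product of a separated set of bounded, even regular, density can grow polynomially on $\R$); and (iii) use the surplus type $2\pi(\sigma-\sigma')$ through your kernel $K$ both to absorb those polynomial losses and to make the atoms a Bessel system. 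Steps (i)--(ii) are the substance of the theorem; as written, your argument outsources them to an existence claim that is false in its strong form and unproved in its weak form.
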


A complete description of interpolation sets for the case of a single interval is known, but it is not expressed in terms of some density (see \cite{MR2040080}). For a discussion on the interpolation problem for $PW_S$ in the setting of disconnected spectra $S$, we refer the reader to \cite[Lecture~5]{ou}.

We now formulate the main result of this section.
\begin{theorem}\label{t3}
    Assume $\Ff$ is a non-trivial generator satisfying \eqref{e1}.
   Then $\Gamma$-shifts of $\Ff$ are $l^2$-stable if and only if there is a number $r>0$ such that $\Gamma$ is an interpolation set for $PW_{\{|\hat\Ff|>r\}}$.   
    \end{theorem}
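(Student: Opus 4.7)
The strategy is to reformulate both conditions in terms of the formal Fourier series
\[
\mu_c(t) := \sum_{\gamma \in \Gamma} c_\gamma e^{-2\pi i \gamma t}.
\]
For a finitely supported sequence $c$ this is a bounded function, and a direct Parseval computation yields
\[
\left\|\sum_{\gamma \in \Gamma} c_\gamma \Ff(\cdot - \gamma)\right\|_2^2 = \int_\R |\hat\Ff(t)|^2 |\mu_c(t)|^2\,dt.
\]
Combined with the upper bound from Lemma~\ref{lemma_above_est}, a density argument extends this identity (interpreting $\hat\Ff\cdot\mu_c$ as the $L^2$-limit of $\hat\Ff\cdot\mu_{c^{(k)}}$) to all $c \in l^2(\Gamma)$. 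On the other hand, by the classical duality between interpolation sets and Riesz sequences of exponentials, $\Gamma$ is an interpolation set for $PW_S$ if and only if $\{e^{-2\pi i \gamma t}\}_{\gamma \in \Gamma}$ is a Riesz sequence in $L^2(S)$, namely there exist $C_1, C_2 > 0$ with
\[
C_1 \|c\|_2^2 \le \int_S |\mu_c(t)|^2\,dt \le C_2 \|c\|_2^2, \qquad c \in l^2(\Gamma).
\]
Note that $|\hat\Ff|^2 \in W$ forces $S := \{|\hat\Ff| > r\}$ to be bounded for every $r > 0$, and the Bessel (upper) half of the Riesz-sequence inequality is automatic for separated $\Gamma$ on a bounded set.

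For sufficiency ($\Leftarrow$), assume $\Gamma$ is an interpolation set for $PW_S$ with $S = \{|\hat\Ff| > r\}$. Then the Parseval identity and the Riesz-sequence lower bound give
\[
\left\|\sum_\gamma c_\gamma \Ff(\cdot - \gamma)\right\|_2^2 \;\ge\; \int_S |\hat\Ff|^2 |\mu_c|^2\,dt \;\ge\; r^2 \int_S |\mu_c|^2\,dt \;\ge\; r^2 C_1 \|c\|_2^2,
\]
while Lemma~\ref{lemma_above_est} supplies the upper bound in the stability inequality.

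For necessity ($\Rightarrow$), I would argue by contradiction. If no $r > 0$ satisfies the required Riesz-sequence lower bound, choose $r_n \downarrow 0$ and unit-norm $c^{(n)} \in l^2(\Gamma)$ with $\int_{\{|\hat\Ff| > r_n\}} |\mu_{c^{(n)}}|^2\,dt \to 0$; it then suffices to derive $\|f^{(n)}\|_2 \to 0$ where $f^{(n)} := \sum_\gamma c_\gamma^{(n)} \Ff(\cdot - \gamma)$. The plan is to split $\int_\R = \int_{|t| > R} + \int_{|t| \le R}$. For the tail, set $M_k := \||\hat\Ff|^2\|_{L^\infty(k, k+1)}$ and use the uniform local Bessel inequality $\int_k^{k+1} |\mu_{c^{(n)}}|^2\,dt \le C(\Delta(\Gamma))$, valid on any unit interval for any separated $\Gamma$ with $\|c^{(n)}\|_2 = 1$; this bounds the tail by $C \sum_{|k| > R} M_k$, which is small for large $R$ by $|\hat\Ff|^2 \in W$. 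The central part is further split at the level $|\hat\Ff| = r_n$: the high piece is bounded by $\|\hat\Ff\|_\infty^2 \int_{\{|\hat\Ff| > r_n\}} |\mu_{c^{(n)}}|^2 \to 0$ by construction, while the low piece is at most $r_n^2 \int_{-R}^R |\mu_{c^{(n)}}|^2 \le 2 R C r_n^2 \to 0$ for each fixed $R$. Taking $R$ large and then $n \to \infty$ forces $\|f^{(n)}\|_2 \to 0$, contradicting $l^2$-stability. The main technical obstacle is this two-scale argument: since $\mu_c \notin L^2(\R)$ in general, no global $L^2$-control on $\mu_c$ is available, and the amalgam condition $|\hat\Ff|^2 \in W$ is exactly what promotes the purely local Bessel bound on separated exponentials into a useful uniform tail estimate.
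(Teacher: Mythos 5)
Your proof is correct, and the sufficiency direction coincides with the paper's: both pass to the Fourier side, invoke the characterization of interpolation sets via the lower bound $\int_S|\mu_c|^2\,dt\geq K\|\cc\|_2^2$ (Lemma~\ref{lint}), use $|\hat\Ff|^2>r^2$ on $S$, and close with Lemma~\ref{lemma_above_est}. In the necessity direction you use the same three ingredients as the paper --- the Bessel inequality on unit intervals (Lemma~\ref{bes}), the amalgam condition $|\hat\Ff|^2\in W$ for tail control, and the failure of the interpolation lower bound to kill the ``high'' piece --- but you organize the decomposition differently, and arguably more cleanly. The paper first applies a rearrangement lemma (Lemma~\ref{l1}) to tie the cutoff level $\epsilon/N$ to a set $S_N$ of at most $N$ unit intervals, and then splits $\int_\R$ into $\{|\hat\Ff|^2>\epsilon/N\}$, $S_N\setminus\{|\hat\Ff|^2>\epsilon/N\}$, and $\R\setminus S_N$, so that the middle piece is controlled by the product (level)$\times$(number of intervals)$\,=K\epsilon$. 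You instead decouple the truncation radius $R$ from the level $r_n$: the tail $|t|>R$ is bounded by $C\sum_{|k|>R}M_k$ uniformly in $n$, the low piece on $[-R,R]$ is $O(Rr_n^2)$ for fixed $R$, and a double limit ($n\to\infty$ first, then $R\to\infty$) finishes the argument. This bypasses Lemma~\ref{l1} entirely at the cost of a slightly less quantitative estimate; both routes are valid, and your observation that $|\hat\Ff|^2\in W$ makes $\{|\hat\Ff|>r\}$ essentially bounded (so the Bessel half of the Riesz-sequence condition is automatic) is a correct and useful remark that the paper leaves implicit.
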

Above we set $\{|\hat\Ff|>r\}:=\{t\in\R:|\hat\Ff(t)|>r\}$. This set is defined a.e.

Using Theorem \ref{t3}, we give two examples of generators $\Ff$ for which it is easy to describe all separated sets $\Gamma$ such that  $\Gamma$-shifts of $\Ff$ are  $l^2$-stable.

\begin{example} Let $\Ff(x)=\sinc(x)$. Then $\hat\Ff={\bf 1}_{[-1/2,1/2]}$. By  Theorem~\ref{t3},   $\Gamma$-shifts of $\Ff$ are $l^2$-stable if and only if $\Gamma$ is an interpolation set for $PW_{[-1/2,1/2]}$. Therefore, the interpolation sets $\Gamma$ for $PW_{[-1/2,1/2]}$ described in \cite{MR2040080} are exactly the sets $\Gamma$ for which $l^2$-shifts of $\Ff(x)=\sinc(x)$ are $l^2$-stable.\end{example}

A similar result holds for every generator $\Ff$  such that 
 $|\hat\Ff(t)|>\delta> 0$ a.e. on an interval $(a,b)$ and $\hat \Ff(t)=0$ a.e. for $t\not\in (a,b)$.

\begin{example} Let $\Ff(x)={\rm sinc}^n(x), n\geq2$. Then $\hat\Ff$ is continuous and vanishes outside $(-n/2,n/2)$. By Theorems \ref{inter} and \ref{t3},  $\Gamma$-shifts of $\Ff$ are $l^2$-stable if and only if $D^+(\Gamma)<n$.  
\end{example}

More generally, if   $\hat\Ff(t)$ is a continuous function whose  zero set is $\Zer(\hat{\Ff})= \R\setminus(a,b)$, then  $\Gamma$-shifts of $\Ff$ are $l^2$-stable if and only if $D^+(\Gamma)<b-a$.

Finally, the following  result is a simple consequence of  Theorems \ref{inter} and \ref{t3}:

\begin{corollary}\label{t2} The following statements are equivalent:

{\rm(i)} $D^+(\Gamma)=0${\rm;} 

{\rm(ii)}  $\Gamma$-shifts of $\Ff$ are  $l^2$-stable, for every non-trivial $\Ff$ satisfying  \eqref{e1} and $\hat\Ff\in C(\R)$.
\end{corollary}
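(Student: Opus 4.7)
The plan is to derive this as a direct consequence of Theorems \ref{inter} and \ref{t3}, using nothing beyond basic continuity of $\hat\Ff$ in one direction and a concrete construction in the other.

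For the direction (i) $\Rightarrow$ (ii), suppose $D^+(\Gamma)=0$ and fix any nontrivial $\Ff$ satisfying \eqref{e1} with $\hat\Ff\in C(\R)$. Since $\hat\Ff$ is continuous and not identically zero, there exist $t_0\in\R$ and $\delta,r>0$ such that $(t_0-\delta,t_0+\delta)\subset\{|\hat\Ff|>r\}$. Because $D^+(\Gamma)=0<2\delta$, Theorem \ref{inter} supplies the interpolation property of $\Gamma$ for $PW_{(t_0-\delta,t_0+\delta)}$; and since $PW_{(t_0-\delta,t_0+\delta)}\subset PW_{\{|\hat\Ff|>r\}}$, the same interpolating function witnesses interpolation for the larger space. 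Theorem \ref{t3} then delivers the $l^2$-stability.

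For (ii) $\Rightarrow$ (i), I argue by contrapositive: assume $D^+(\Gamma)>0$ and pick $\epsilon>0$ with $2\epsilon<D^+(\Gamma)$. Choose any nontrivial continuous function supported in $[-\epsilon,\epsilon]$, for example the tent $\hat\Ff(t)=\max(0,\epsilon-|t|)$, and let $\Ff$ be its inverse Fourier transform. Then $\hat\Ff\in C(\R)$ is bounded with compact support, so $|\hat\Ff|^2\in W$ automatically. For every $r>0$ the spectrum $\{|\hat\Ff|>r\}$ is contained in $[-\epsilon,\epsilon]$, whence $PW_{\{|\hat\Ff|>r\}}\subset PW_{[-\epsilon,\epsilon]}$; by Theorem \ref{inter}, $\Gamma$ fails to be an interpolation set for $PW_{[-\epsilon,\epsilon]}$, hence also for any of its subspaces $PW_{\{|\hat\Ff|>r\}}$. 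Theorem \ref{t3} then prohibits $l^2$-stability of these $\Gamma$-shifts, violating (ii). The only point requiring care is the direction of the monotonicity: a smaller spectrum yields a smaller Paley--Wiener space, so interpolation propagates from small spectra to large (used in (i) $\Rightarrow$ (ii)), while failure of interpolation propagates from large spectra to small (used in (ii) $\Rightarrow$ (i)) --- exactly what is needed for each implication, so no genuine obstacle arises.
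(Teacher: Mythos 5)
Your proof is correct and follows exactly the route the paper intends: the paper gives no explicit argument for Corollary~\ref{t2}, stating only that it is a simple consequence of Theorems~\ref{inter} and \ref{t3}, and your two implications (finding a small interval where $|\hat\Ff|>r$ when $D^+(\Gamma)=0$, and the compactly supported tent counterexample when $D^+(\Gamma)>0$) together with the correct monotonicity of the interpolation property in the spectrum supply precisely that deduction. The only minor point worth noting is that $\{|\hat\Ff|>r\}$ has finite measure (needed for $PW_{\{|\hat\Ff|>r\}}$ to fit the paper's definition), which follows from $|\hat\Ff|^2\in W\subset L^1(\R)$.
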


\subsection{ \texorpdfstring{$l^2$-stability}{l2} and universal interpolation}

\begin{definition} A separated set
$\Gamma$ is called a universal  interpolation set  if it is an interpolation set for $PW_S$, for every open set $S$  satisfying $|S|< D^+(\Gamma),$ where $|S|$ is the measure of $S.$
\end{definition}

The first construction of universal interpolation (and universal sampling) sets was presented in \cite{MR2439002}.
For another construction, see  \cite{MR2674875}.

If $\Gamma$ is a universal interpolation set, the stability of $\Gamma$-shifts  depends only on the size of spectrum of $\Ff.$ 
More precisely, from Theorem \ref{t3} we deduce
\begin{corollary}
    The following statements are equivalent:

    {\rm(i)} $\Gamma$ is a universal interpolation set{\rm;}

    {\rm(ii)} If  $\Ff$  satisfies \eqref{e1} and $\hat\Ff\in C(\R)$, then $\Gamma$-shifts of $\Ff$ are $l^2$-stable if and only if $|\{|\hat\Ff|>0\}|>D^+(\Gamma)$.
\end{corollary}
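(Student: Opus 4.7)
The proof should follow by combining Theorem~\ref{t3} with the definition of universal interpolation set and the Landau-type density bound $D^{+}(\Gamma)\le|S|$ valid for any interpolation set $\Gamma$ of $PW_{S}$. I would split it into two implications.

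For (i)$\Rightarrow$(ii), fix $\Ff$ with $\hat\Ff\in C(\R)$ and $|\hat\Ff|^{2}\in W$. Suppose first that $|\{|\hat\Ff|>0\}|>D^{+}(\Gamma)$; by continuity of measure from below, the open sets $\{|\hat\Ff|>r\}$ increase in measure to $|\{|\hat\Ff|>0\}|$ as $r\to 0^{+}$, so I can pick $r>0$ with $|\{|\hat\Ff|>r\}|>D^{+}(\Gamma)$. Universal interpolation then gives $\Gamma$ as an interpolation set for $PW_{\{|\hat\Ff|>r\}}$, and Theorem~\ref{t3} yields $l^{2}$-stability. Conversely, if $l^{2}$-stability holds, Theorem~\ref{t3} produces $r>0$ with $\Gamma$ interpolating on $PW_{\{|\hat\Ff|>r\}}$; the density bound gives $D^{+}(\Gamma)\le|\{|\hat\Ff|>r\}|$, and continuity of $\hat\Ff$ upgrades this to the strict inequality $D^{+}(\Gamma)<|\{|\hat\Ff|>0\}|$.

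For (ii)$\Rightarrow$(i), let $S$ be an arbitrary open set with $|S|>D^{+}(\Gamma)$. I would construct a continuous $\varphi\colon\R\to[0,\infty)$ with $\{\varphi>0\}=S$ and $|\varphi|^{2}\in W$ (e.g.\ a smooth positive bump on each component of $S$, tapered to zero on $\partial S$ and multiplied by a Gaussian when $S$ is unbounded to secure amalgam-space decay). Let $\Ff$ be the function with $\hat\Ff=\varphi$. Since $|\{|\hat\Ff|>0\}|=|S|>D^{+}(\Gamma)$, hypothesis (ii) gives $l^{2}$-stability of the $\Gamma$-shifts of $\Ff$, and Theorem~\ref{t3} then yields some $r>0$ with $\Gamma$ an interpolation set for $PW_{\{|\hat\Ff|>r\}}$. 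Since $\{|\hat\Ff|>r\}\subset S$, the inclusion $PW_{\{|\hat\Ff|>r\}}\subset PW_{S}$ forces $\Gamma$ to interpolate $PW_{S}$ as well; as $S$ was arbitrary, $\Gamma$ is a universal interpolation set.

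The main obstacle is the strict inequality in the ``only if'' direction of (i)$\Rightarrow$(ii): one must show $|\{|\hat\Ff|>r\}|<|\{|\hat\Ff|>0\}|$ for every $r>0$ whenever $\hat\Ff\in C(\R)$ and $|\hat\Ff|^{2}\in W$. When the (open) support has non-empty boundary, continuity forces $|\hat\Ff|\to 0$ near it, so a boundary-layer set of positive measure is excluded from $\{|\hat\Ff|>r\}$; in the marginal case $\{|\hat\Ff|>0\}=\R$, the amalgam-space condition forces $|\hat\Ff|$ to decay in an averaged sense, so $\{|\hat\Ff|>r\}$ has finite measure while $\{|\hat\Ff|>0\}$ is infinite. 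The remaining ingredients---the Landau density bound for interpolation sets and the explicit construction of $\varphi$---are routine.
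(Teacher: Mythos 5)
Your argument is correct and is essentially the deduction the paper intends (the corollary is stated there without proof, as a direct consequence of Theorem~\ref{t3}); note that the printed definition of a universal interpolation set has the inequality reversed, and you have rightly read it as $|S|>D^+(\Gamma)$. The only ingredient beyond Theorem~\ref{t3} is the Landau-type necessary condition $D^+(\Gamma)\le |S|$ for interpolation sets of $PW_S$, which you must invoke for general sets $S$ of finite measure rather than just finite unions of intervals, since $\{|\hat\Ff|>r\}$ may have infinitely many components that cannot be covered by a finite union of intervals of comparable measure; this general form is standard and available in the cited literature. Your strictness step $|\{|\hat\Ff|>r\}|<|\{|\hat\Ff|>0\}|$ is sound: either the open set $\{|\hat\Ff|>0\}$ is a nonempty proper subset of $\R$, in which case $\hat\Ff$ vanishes on its boundary and continuity excludes a set of positive measure near any boundary point from $\{|\hat\Ff|>r\}$, or it equals $\R$, in which case $|\hat\Ff|^2\in W\subset L^1(\R)$ forces $\{|\hat\Ff|>r\}$ to have finite measure.
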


\subsection{From  \texorpdfstring{$l^2$}{l2}- to  \texorpdfstring{$l^p$}{lp}-stability} Somewhat similar to  Theorem \ref{t0}, we prove
\begin{theorem}\label{t1}
     Assume $\Ff\in W$. If $\Gamma$-shifts of $\Ff$ are $l^2$-stable, then they are stable.
\end{theorem}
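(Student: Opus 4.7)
The plan is to upgrade the $l^2$-lower bound to an $l^p$-lower bound by analyzing the Gramian operator and appealing to a Wiener-type inverse-closedness lemma. First, since $\Ff\in W$, the right-hand inequality in \eqref{stab} holds for every $p\in[1,\infty]$ by Lemma~8.1 in \cite{uz}, so only the left-hand inequality requires attention. Denote by $T\cc=\sum_{\gamma\in\Gamma}c_\gamma\Ff(\,\cdot-\gamma)$ the synthesis operator and by
$$(T^\ast f)_\gamma=(f\ast\tilde{\Ff})(\gamma),\qquad \tilde{\Ff}(x)=\overline{\Ff(-x)},$$
its formal $L^2$-adjoint. A first routine step is to verify that, thanks to $\tilde{\Ff}\in W$ and the separation of $\Gamma$, the operator $T^\ast$ extends to a bounded map $L^p(\R)\to l^p(\Gamma)$ for every $p\in[1,\infty]$: this is a standard amalgam estimate bounding $\bigl\{\|f\ast\tilde{\Ff}\|_{L^\infty(n,n+1)}\bigr\}_{n\in\Z}$ in $l^p$ by a multiple of $\|\tilde{\Ff}\|_W\|f\|_p$, combined with the bound $\sup_{n\in\Z}\#(\Gamma\cap(n,n+1))<\infty$.

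The core of the argument is the composition $G:=T^\ast T$, which is a matrix of convolution form
$$G_{\gamma,\gamma'}=a(\gamma-\gamma'),\qquad a:=\Ff\ast\tilde{\Ff}.$$
Because $W$ is an algebra under convolution, $a\in W$; a Schur test then shows that $G$ is bounded on $l^p(\Gamma)$ for every $p$. The hypothesis of $l^2$-stability is equivalent to $G$ being boundedly invertible on $l^2(\Gamma)$. At this point I would invoke a non-commutative Wiener-type lemma for matrices with amalgam-kernel off-diagonal decay (in the spirit of Sj\"ostrand, Baskakov, or Gr\"ochenig--Leinert, adapted to matrices indexed by a separated subset of $\R$) to conclude that $G^{-1}$ belongs to the same Banach algebra and is therefore bounded on $l^p(\Gamma)$ for every $p\in[1,\infty]$.

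With both ingredients available, the lower bound in \eqref{stab} follows immediately: for $\cc\in l^p(\Gamma)$ we write $\cc=G^{-1}T^\ast(T\cc)$, hence
$$\|\cc\|_p\le\|G^{-1}\|_{l^p\to l^p}\,\|T^\ast\|_{L^p\to l^p}\,\|T\cc\|_p,$$
which is precisely the required $l^p$-lower estimate.

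The main obstacle I anticipate is the Wiener-type inverse-closedness step. The natural Banach algebra here is the space of $\Gamma\times\Gamma$ matrices of the form $a(\gamma-\gamma')$ with $a\in W$, but because $\Gamma$ is not a group the classical $\Z^d$-indexed Sj\"ostrand theorem does not apply verbatim. One either embeds $\Gamma$ into a reference lattice and extends $G$ by the identity off $\Gamma\times\Gamma$, or cites a version of inverse-closedness formulated directly for matrices indexed by a separated (or relatively separated) subset of $\R$, as found in works of Sun, Shin--Sun, and Aldroubi--Baskakov--Krishtal. Checking that the amalgam decay of $a$ actually places $G$ in the correct inverse-closed subalgebra, and that the resulting $l^p$-operator norm of $G^{-1}$ is finite for the full range of $p$, is the principal technical point of the argument.
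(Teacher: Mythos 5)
Your argument is essentially the paper's own proof: the same synthesis/analysis operators, the same Gramian $G_{\gamma,\gamma'}=(\Ff\ast\tilde{\Ff})(\gamma-\gamma')$ with amalgam off-diagonal decay, and the same appeal to a Wiener-type result for matrices indexed by separated sets (the paper uses the lemma of Gr\"ochenig--Ortega-Cerd\`a--Romero \cite{MR3336091}, which transfers the $l^2$ lower bound of $G$ to all $l^p$ and thereby resolves exactly the ``$\Gamma$ is not a group'' issue you flag). The only cosmetic difference is that the paper works directly with the lower-bound transference $\|G\cc\|_p\geq C\|\cc\|_p$ rather than with boundedness of $G^{-1}$ on $l^p$.
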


\section{Stability of shifts and crystalline measures}\label{Sec_stab_quasicrystal}
Below we consider the family of continuous generators $\Ff\in W$ such that the
zero set  Zer$(\hat\Ff):=\{t:\hat\Ff(t)=0\}$ of  $\hat\Ff$ is {\it locally finite}, i.e.
\begin{equation}\label{locfin}
     \#\left(\mbox{Zer}(\hat\Ff) \cap (x,x+1)\right) < \infty,\quad  \mbox{ for every } x\in\R.
\end{equation}
This condition is satisfied whenever $\hat\Ff$ is a real analytic (or a quasi-analytic) function.
In particular, every  $\Ff\in L^1(\R)$ with bounded support satisfies \eqref{locfin}.

It turns out that the stability of $\Gamma$-shifts property for the generators satisfying \eqref{locfin} is equivalent to the existence of certain crystalline measures.

\subsection{Sparse crystalline measures} A crystalline measure is an atomic measure supported by a locally ﬁnite set whose distributional Fourier transform is also supported by a locally ﬁnite set. More precisely, following Yves Meyer {\rm(}see e.g. \cite{MR3482845} or \cite{Meyer_TN}{\rm)}, we introduce

\begin{definition}
An  atomic measure $$\mu=\sum_{\gamma\in\Gamma}c_\gamma\delta_\gamma$$is a {\it crystalline measure} if 
\begin{enumerate}
    \item[\bf (i)] $\mu$ is a tempered distribution{\rm;}
    \item[\bf (ii)] The distributional Fourier transform $\hat\mu$ is again an atomic measure $$\hat\mu=\sum_{s\in S}a_s\delta_s;$$
    \item[\bf (iii)] The support $\Gamma$ of $\mu$ and its spectrum  $S$  are locally finite sets.
\end{enumerate}

Measure $\mu$ is called a {\it sparse crystalline measure} if the set $\Gamma$ is separated.
\end{definition}

Denote by $S(\R)$ the set of all Schwartz functions. Given a crystalline measure $\mu$, the following generalized Poisson formula holds for every  $F\in S(\R)$ (see \cite{Meyer_TN}, Section 3): 
\begin{equation}\label{crys}
    \sum_{\gamma\in\Gamma}c_\gamma \hat F(\gamma)=\sum_{s\in S}a_s F(s).
\end{equation}

A classical example of a sparse crystalline measure is the Poisson comb $$\mu=\sum_{n\in\Z}\delta_n.$$The Poisson summation formula implies $\hat\mu=\mu.$
More generally, every generalized Poisson comb \begin{equation}\label{poi}
    \mu=\sum_{k=1}^m\sum_{n\in\Z}P_k(n)\delta_{an+x_k},\quad m\in\N,\end{equation}is a sparse crystalline measure, where $a>0$, $x_k\in\R$ and $P_k, k=1,...,m,$ are arbitrary exponential polynomials with purely imaginary frequencies. One can check that the Fourier transform of a generalized Poisson comb is also a generalized Poisson comb.

Crystalline measures are useful for studying mathematical structures and their applications in signal processing and crystallography.
They are essential for finding an adequate mathematical model for the physical quasicrystals discovered in the 1980s by  Dan Shechtman.

 We will say that a separated set $\Gamma$ supports a sparse crystalline measure, if there exists a sparse crystalline measure $\mu$ whose support lies in $\Gamma$.

\subsection{Weak limits  of translates of  separated sets}
Given a sequence of separated sets $\Gamma_k\subset\R$ such that $\inf_k \Delta(\Gamma_k) > 0,$ where $\Delta(\Gamma)$ is defined in \eqref{sepa}, we say that the sequence converges weakly to some separated set  $\Gamma' \subset \R$ if for every $\epsilon>0$ and $R>0$ there is an integer $l=l(\epsilon, R)$ such that
$$\Gamma_k\cap(-R,R) \subset \Gamma'+(-\epsilon, \epsilon),\quad \Gamma'\cap(-R,R)\subset \Gamma_k+(-\epsilon, \epsilon), \quad k\geq l. $$

Let $\Gamma\subset\R$ be a separated set. We denote by $W(\Gamma)$ the set of weak limits of all possible sequences $\Gamma_k:=\Gamma-x_k, x_k\in\R.$ 

We need the following lemma, see e.g., \cite[Lecture~3]{ou}.
\begin{lemma}\label{lsep} Every sequence of separated sets 
$\{\Gamma_k\}_{k\in\N}$ satisfying $\inf_k \Delta(\Gamma_k)=:d > 0,$ contains a subsequence $\Gamma_{k_j}$ which converges weakly to some separated set $\Gamma'$ satisfying  $ \Delta(\Gamma')\geq d.$
In particular, every sequence of translates  $\Gamma -x_k: x_k\in\R,$ of a separated set $\Gamma$ contains a subsequence $\Gamma-x_{k_j}$ which  converges weakly to some separated set $\Gamma'$ satisfying $\Delta(\Gamma')\geq \Delta(\Gamma).$
\end{lemma}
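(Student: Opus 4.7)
The plan is to apply a standard Cantor-style diagonal extraction argument, using that the uniform separation bound $\inf_k \Delta(\Gamma_k) = d > 0$ forces uniform local finiteness of the sets $\Gamma_k$ on every bounded interval.

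\emph{Step 1 (uniform local finiteness).} I first observe that for every fixed $R > 0$,
\[
\#\bigl(\Gamma_k \cap [-R,R]\bigr) \le \tfrac{2R}{d} + 1 =: N_R,
\]
independently of $k$, because consecutive points of $\Gamma_k$ are at distance at least $d$. In particular, after passing to a subsequence I may assume that the cardinality $\#(\Gamma_k \cap [-R,R])$ is some constant $m_R \le N_R$.

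\emph{Step 2 (convergence on a fixed interval).} Writing $\Gamma_k \cap [-R,R] = \{\gamma_{k,1} < \cdots < \gamma_{k,m_R}\}$, each coordinate sequence lies in the compact set $[-R,R]$. By Bolzano--Weierstrass applied coordinatewise, I can extract a subsequence along which $\gamma_{k,j} \to \gamma_j^{(R)}$ for every $j$. Since the gaps $\gamma_{k,j+1} - \gamma_{k,j} \ge d$ are preserved in the limit, the finite set $\Gamma^{(R)} := \{\gamma_1^{(R)}, \dots, \gamma_{m_R}^{(R)}\}$ is separated with $\Delta(\Gamma^{(R)}) \ge d$.

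\emph{Step 3 (diagonalization).} I apply Step 2 successively with $R = n$ for $n \in \N$, and extract a diagonal subsequence (still denoted $\{\Gamma_k\}$) along which, for every $n$, the intersection $\Gamma_k \cap [-n,n]$ converges to $\Gamma^{(n)}$ in the sense above. A straightforward consistency check shows that for $n_1 < n_2$ one has $\Gamma^{(n_2)} \cap (-n_1,n_1) = \Gamma^{(n_1)} \cap (-n_1,n_1)$, which lets me define a single set $\Gamma' := \bigcup_n \Gamma^{(n)}$ that is separated with $\Delta(\Gamma') \ge d$.

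\emph{Step 4 (weak convergence and the translate statement).} Given $\epsilon > 0$ and $R > 0$, I pick $n > R+1$; then for all sufficiently large $k$ the finite set $\Gamma_k \cap [-n,n]$ is within Hausdorff distance $\epsilon$ of $\Gamma^{(n)} \subset \Gamma'$, which yields both inclusions in the definition of weak convergence. The final assertion of the lemma then follows immediately by specializing to $\Gamma_k := \Gamma - x_k$, since every such translate has separation exactly $\Delta(\Gamma)$.

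\emph{Anticipated obstacle.} The only delicate point I foresee is controlling boundary behavior in Step 3: a point $\gamma_{k,j} \in \Gamma_k \cap [-n,n]$ might converge to $\pm n$ exactly, and such a boundary limit has to be handled coherently as $n$ grows so that the local limits can be glued into one $\Gamma'$. I would circumvent this by carrying out the extraction on slightly enlarged intervals $[-n-1, n+1]$ at each stage and then keeping only limits lying in $(-n,n)$; the resulting buffer makes the compatibility of the sets $\Gamma^{(n)}$ transparent and lets the diagonalization proceed without further case analysis.
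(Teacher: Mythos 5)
Your proposal is correct: the uniform bound $\#(\Gamma_k\cap[-R,R])\le 2R/d+1$, coordinatewise Bolzano--Weierstrass, and a diagonal extraction with a buffer interval to glue the local limits is exactly the standard compactness argument for weak convergence of separated sets, and your anticipated boundary obstacle together with its fix is the only genuinely delicate point. Note that the paper does not prove this lemma at all --- it cites \cite[Lecture~3]{ou} --- so there is no in-paper proof to compare against; your argument is the expected one and is complete (the only cosmetic remark is that the limit set $\Gamma'$ may be empty, which the definition of weak convergence tolerates and which does not affect the applications in the paper).
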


\subsection{Stability of shifts and crystalline measures}

\begin{theorem}\label{t5}
    Assume $\Gamma\subset\R$ is a separated set and  $\Ff\in W\cap C(\R)$  satisfies  \eqref{locfin}.
    Then $\Gamma$-shifts of $\Ff$ are $l^{\infty}$-stable if and only if no set $\Gamma^\ast\in W(\Gamma)$ supports a sparse crystalline measure with bounded coefficients whose spectrum lies in ${\rm Zer}(\hat\Ff).$
\end{theorem}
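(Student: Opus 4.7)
The plan is to establish both directions by transferring information between $\Gamma$ and a weak limit $\Gamma^\ast\in W(\Gamma)$, using the Wiener-amalgam decay of $\Ff$ (so in particular $\Ff(t)\to 0$ at infinity), the continuity of $\hat\Ff$, and the local finiteness~\eqref{locfin} of $\Zer(\hat\Ff)$.

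For the sufficiency direction (existence of the crystalline measure $\Rightarrow$ non-stability), I would start with a sparse crystalline measure $\mu=\sum_{\gamma^\ast\in\Gamma^\ast}c_{\gamma^\ast}\delta_{\gamma^\ast}$ with bounded coefficients on some $\Gamma^\ast\in W(\Gamma)$ and $\hat\mu=\sum_{s\in S}a_s\delta_s$ with $S\subset\Zer(\hat\Ff)$. Since $\hat\Ff(s)=0$ and $\hat\Ff$ is continuous,
\[
\widehat{\mu\ast\Ff}=\hat\mu\cdot\hat\Ff=\sum_{s\in S}a_s\hat\Ff(s)\delta_s=0,
\]
so $\mu\ast\Ff\equiv0$. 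Choosing $x_k\in\R$ with $\Gamma-x_k\to\Gamma^\ast$ weakly and fixing a large $R>0$, I would construct ${\bf c}^{(k)}\in l^\infty(\Gamma)$ supported on the points of $\Gamma\cap(x_k-R-1,x_k+R+1)$ obtained by matching each $\gamma^\ast\in\Gamma^\ast\cap(-R,R)$ with the unique close point $\gamma^\ast_k\in(\Gamma-x_k)\cap(-R,R)$, and setting $c^{(k)}_{\gamma^\ast_k+x_k}:=c_{\gamma^\ast}$. The corresponding $f_k:=\sum_\gamma c^{(k)}_\gamma\Ff(\cdot-\gamma)$ is small on $\R$: for $|x-x_k|$ large the support confinement and decay of $\Ff$ make $|f_k(x)|$ small, while for $|x-x_k|$ bounded the deviation from $(\mu\ast\Ff)(x-x_k)=0$ splits into a truncation tail (controlled by Wiener-amalgam tails of $\Ff$) and a matching error (controlled by the modulus of continuity of $\Ff$ on a compact interval). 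First fixing $R$ large, then $k$ large, forces $\|f_k\|_\infty\to0$ while $\|{\bf c}^{(k)}\|_\infty\ge\sup|c_{\gamma^\ast}|>0$, contradicting $l^\infty$-stability.

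For the necessity direction I would start from non-stability: ${\bf c}^{(k)}\in l^\infty(\Gamma)$ with $\|{\bf c}^{(k)}\|_\infty=1$ and $\|f_k\|_\infty\to0$. Picking $\gamma_k\in\Gamma$ with $|c^{(k)}_{\gamma_k}|\to1$ and applying Lemma~\ref{lsep} produces a subsequence with $\Gamma-\gamma_k\to\Gamma^\ast$ weakly; a diagonal argument on the bounded coefficients then yields, for each $\gamma^\ast\in\Gamma^\ast$, a limit $c_{\gamma^\ast}:=\lim_k c^{(k)}_{\gamma_k+\gamma^\ast_k}$, where $\gamma^\ast_k\in\Gamma-\gamma_k$ is the point matched to $\gamma^\ast$. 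The resulting $\mu:=\sum_{\gamma^\ast\in\Gamma^\ast}c_{\gamma^\ast}\delta_{\gamma^\ast}$ is nonzero ($|c_0|=1$) with $|c_{\gamma^\ast}|\le1$, and the same truncation-plus-matching estimate run in reverse yields $f_k(\cdot+\gamma_k)\to\mu\ast\Ff$ pointwise. Combined with $\|f_k\|_\infty\to0$, this forces $\mu\ast\Ff\equiv 0$.

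The hard part will be upgrading this $\mu$ to a genuine sparse crystalline measure, i.e.\ showing that $\hat\mu$ is an \emph{atomic measure} whose support lies in $\Zer(\hat\Ff)$. Translation-boundedness of $\mu$ gives the estimate
\[
|\langle\hat\mu,\phi\rangle|=\Bigl|\sum_{\gamma^\ast}c_{\gamma^\ast}\widehat\phi(\gamma^\ast)\Bigr|\le\|{\bf c}\|_\infty\,C_{\Gamma^\ast}\,\|\widehat\phi\|_W,\qquad\phi\in S(\R).
\]
My plan here is a scaling test: for any $s_0\in\R$ and $\phi\in C_c^\infty(\R)$, set $\phi_\lambda(x):=\phi(\lambda(x-s_0))$, so $\widehat{\phi_\lambda}(\xi)=\lambda^{-1}e^{-2\pi is_0\xi}\widehat\phi(\xi/\lambda)$ and a direct computation gives $\|\widehat{\phi_\lambda}\|_W\le C\|\widehat\phi\|_W$ uniformly for $\lambda\ge1$. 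Hence $|\langle\hat\mu,\phi_\lambda\rangle|$ stays bounded as $\lambda\to\infty$, whereas a $\delta_{s_0}^{(j)}$-component of $\hat\mu$ with $j\ge1$ would, for $\phi$ vanishing to order $j-1$ at $0$ with $\phi^{(j)}(0)\ne 0$, force growth like $\lambda^j$. This contradiction rules out derivative-of-delta contributions and shows $\hat\mu$ is a Radon measure. The distributional identity $\hat\Ff\cdot\hat\mu=0$, now a genuine equality of measures because $\hat\Ff$ is continuous, concentrates $\hat\mu$ on $\Zer(\hat\Ff)$, and the local finiteness~\eqref{locfin} of this zero set turns $\hat\mu$ into an atomic measure, completing the identification of $\mu$ as the required sparse crystalline measure with bounded coefficients, spectrum in $\Zer(\hat\Ff)$, and support in $\Gamma^\ast\in W(\Gamma)$.
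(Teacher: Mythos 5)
Your overall architecture matches the paper's (which proves Theorem~\ref{t5} through the chain of equivalences in Theorem~\ref{schwartz_approx_theorem}), but two steps you treat as routine are exactly where the real work lies, and as written they fail. First, in the sufficiency direction you pass from $\sum_{\gamma^\ast}c_{\gamma^\ast}\Ff(x-\gamma^\ast)\equiv0$ with merely \emph{bounded} coefficients to finitely supported near-kernel sequences on $\Gamma$ by restricting the coefficients to $\Gamma^\ast\cap(-R,R)$ and invoking ``Wiener-amalgam tails''. This sharp truncation is not uniformly small in $x$: near the edge of the window the discarded terms contain shifts $\Ff(x-\gamma^\ast)$ with $|x-\gamma^\ast|$ small and $|c_{\gamma^\ast}|$ of order $1$. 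Concretely, $\sum_{n\in\Z}(-1)^n\sinc^2(x-n/2)\equiv0$ (the spectrum $2\Z+1$ of $\sum(-1)^n\delta_{n/2}$ lies in $\Zer(\widehat{\sinc^2})$), yet every sharp truncation of this sum has sup-norm bounded below by an absolute constant --- evaluate at the endpoint of the window. The paper circumvents this with a functional-analytic detour (Step~2 of the proof of Theorem~\ref{schwartz_approx_theorem}): via duality and the stability of surjective operators under small perturbation it shows that failure of $l^\infty$-stability forces failure of $c_0$-stability, and truncating a $c_0$ near-kernel sequence \emph{is} harmless because the discarded coefficients themselves are small. Some substitute for this step is indispensable. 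A smaller issue in the same direction: the identity $\widehat{\mu\ast\Ff}=\hat\mu\cdot\hat\Ff$ is not automatic for $\Ff\in W$ only; the paper's Step~6 justifies $\mu\ast\Ff\equiv0$ by applying the Poisson formula \eqref{crys} to Schwartz regularizations $\Ff_{\epsilon,\delta}$ with compactly supported Fourier transform and passing to the limit in two parameters.

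In the converse direction your extraction of $\Gamma^\ast$ and of $\mu$ with $\mu\ast\Ff\equiv0$ is the paper's Step~4, and your scaling test is in substance the paper's Claim~\ref{c3}. But the logical order is inverted and a key tool is missing. The scaling test rules out $\delta^{(j)}$-components, $j\geq1$, only \emph{after} you know that near each point $\hat\mu$ is a distribution supported at that single point (hence a finite combination of derivatives of $\delta$); it does not show that a tempered distribution satisfying $|\langle\hat\mu,\phi\rangle|\leq C\|\hat\phi\|_W$ is a measure (for instance ${\rm p.v.}(1/x)$ is scale-invariant and passes your test). So you must first localize $\supp\hat\mu$ inside the locally finite set $\Zer(\hat\Ff)$, and this is where your sketch is thinnest: ``$\hat\Ff\cdot\hat\mu=0$ concentrates $\hat\mu$ on $\Zer(\hat\Ff)$'' presupposes both the multiplication identity and the ability to divide by $\hat\Ff$ where it does not vanish. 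Since $\hat\Ff$ is only continuous, $\psi/\hat\Ff$ is not an admissible test function, and the division requires an actual argument; the paper's Claim~\ref{c1} supplies it via the local Wiener--L\'evy theorem, producing $h\in L^1(\R)$ with $\hat h=1/\hat\Ff$ on the relevant interval so that $\check\phi=\Ff\ast H$ and $\mu\ast\check\phi=\mu\ast\Ff\ast H\equiv0$. With that localization in place, the order bound of Claim~\ref{c2} and your scaling argument do finish the identification of $\hat\mu$ as an atomic measure. In short, the two genuinely hard points --- the reduction from $l^\infty$ to finitely supported coefficients, and the Wiener--L\'evy localization of $\supp\hat\mu$ --- are precisely the ones your proposal leaves unproved.
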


Recall that by Theorem \ref{t0}, under the conditions of this theorem, the $l^\infty$-stability of $\Gamma$-shifts implies the $l^p$-stability, for every $1\leq p<\infty.$

A stronger result is true for the generators $\Ff$ such that the set  ${\rm Zer}(\hat\Ff)$   
is a {\it finite union} of separated sets. 
It is proved in \cite[Theorem~2.2]{MR3667579} that a sparse crystalline measure with such spectrum is a generalized Poisson comb\footnote{ We also refer the reader to the paper \cite{MR3338010}, where the same statement is proved for separated sets~$\Gamma$.}. This and  Theorem \ref{t5} imply
\begin{corollary}\label{t6}
    Assume $\Gamma\subset\R$ is a separated set and  $\Ff\in W\cap C(\R)$   is such that the set $\Zer(\hat\Ff)$ is a finite union of separated sets.
    Then $\Gamma$-shifts of $\Ff$ are $l^{\infty}$-stable if and only if  no set $\Gamma^\ast\in W(\Gamma)$  supports a generalized Poisson comb whose spectrum lies in $\Zer\,(\hat\Ff)$.
\end{corollary}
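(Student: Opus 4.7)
The plan is to derive the corollary directly from Theorem \ref{t5} combined with the structural result of \cite[Theorem~2.2]{MR3667579} cited just before the statement. By Theorem \ref{t5}, the $l^\infty$-stability of $\Gamma$-shifts fails precisely when some $\Gamma^\ast \in W(\Gamma)$ carries a sparse crystalline measure $\mu$ with bounded coefficients whose spectrum lies in $\Zer(\hat\Ff)$. It therefore suffices to show that, under the hypothesis that $\Zer(\hat\Ff)$ is a finite union of separated sets, the class of such measures on $\Gamma^\ast$ coincides with the class of generalized Poisson combs supported on $\Gamma^\ast$ with spectrum in $\Zer(\hat\Ff)$.

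The easy direction is to verify that every generalized Poisson comb of the form \eqref{poi} is itself a sparse crystalline measure with bounded coefficients: its coefficients $P_k(n) = \sum_j c_{k,j} e^{i\omega_{k,j} n}$ with $\omega_{k,j}\in\R$ satisfy $|P_k(n)|\le \sum_j |c_{k,j}|$, hence form a bounded sequence. For the nontrivial direction, I would start from a sparse crystalline measure $\mu$ supported on $\Gamma^\ast$ with bounded coefficients and spectrum $S \subset \Zer(\hat\Ff)$. Since a subset of a separated set is separated, $S$ is itself a finite union of separated sets. Applying \cite[Theorem~2.2]{MR3667579}, which asserts that any sparse crystalline measure whose spectrum is a finite union of separated sets must be a generalized Poisson comb, one concludes that $\mu$ is such a comb with spectrum in $\Zer(\hat\Ff)$.

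Since the argument is essentially a bookkeeping step after invoking Theorem \ref{t5} and the cited structural theorem, there is no substantive obstacle: the only point that deserves brief care is to ensure that the spectrum of $\mu$, being a subset of $\Zer(\hat\Ff)$, falls within the hypothesis of \cite[Theorem~2.2]{MR3667579}. The automatic boundedness of the coefficients of generalized Poisson combs is what permits passage between the two formulations without any loss.
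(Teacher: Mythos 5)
Your proposal is correct and follows exactly the paper's route: the paper derives Corollary~\ref{t6} in one line from Theorem~\ref{t5} together with the cited result of Lev and Olevskii that a sparse crystalline measure whose spectrum is a finite union of separated sets is a generalized Poisson comb. The bookkeeping you supply (boundedness of the coefficients $P_k(n)$ of a comb, and the fact that a subset of a finite union of separated sets is again such a union, so the structural theorem applies to the spectrum $S\subset\Zer(\hat\Ff)$) is precisely what the paper leaves implicit.
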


 This observation establishes the connection between the sets $\Gamma$ and $\Zer(\hat{\Ff})$. Moreover, if one of these sets has "particularly nice" properties then the stability of shifts is "universal", i.e., it does not depend on the properties of the other set. More precisely, Corollary~\ref{t6} leads to the following results:
\begin{itemize} 
    \item  We give new examples of generators $\Ff$ such that $\Gamma$-shifts of $\Ff$ are $l^{\infty}$-stable for every separated set $\Gamma$. 
    \item  There exist separated sets $\Gamma=\{...<\gamma_{k-1}<\gamma_k<\gamma_{k+1}<..., k\in\Z\}$ satisfying 
    \begin{equation}\label{reldense}
        \gamma_{k+1}-\gamma_k\leq d,\quad  \mbox{for some } d>0,
    \end{equation}
    such that $\Gamma$-shifts of $\Ff$ are $l^{\infty}$-stable, for every generator $\Ff$ satisfying the assumptions imposed in Corollary~\ref{t6}.
\end{itemize}

The support $\Gamma$ of a generalized Poisson comb defined in \eqref{poi} is a subset of a separated periodic set, i.e. $\Gamma\subseteq a\Z+F,$ for some $a>0$ and a finite set $F$. Its spectrum (the support of its Fourier transform) is a subset of $(1/a)\Z+\tilde F,$ where $\tilde F$ is also a finite set. 
Moreover, one may check that the support of a generalized Poisson comb
meets some arithmetic progression in $\R$ in an infinite number of points. The same is true for its spectrum. 
In this connection, observe that there exist sparse crystalline measures (even with integer coefficients) such that the intersection of their support with any arithmetic progression is either empty or finite, see \cite{MR4129870,MR4206541}. 

We see that if $\Ff$ satisfies the assumptions of Corollary \ref{t6} and the intersection of $\Zer(\hat{\Ff})$ with every arithmetic progression is either finite or empty, then $\Gamma$-shifts of $\Ff$ are $l^{\infty}$-stable for every separated set $\Gamma$.

We also see that $\Gamma$-shifts of $\Ff$ are stable for every $\Ff$ satisfying the assumptions of  Corollary~\ref{t6}, provided $\Gamma$ has the property that no weak limit of its translates contains an infinite subset of an arithmetic progression.

\begin{claim}
  There is a separated set $\Gamma=\{\gamma_k, k\in\Z
\}$ satisfying \eqref{reldense}  such that no set $\Gamma^\ast\in W(\Gamma)$ contains an infinite subset of an arithmetic progression.  
\end{claim}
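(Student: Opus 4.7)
The plan is to take $\Gamma$ to be a Fibonacci cut-and-project (model) set and exploit the rigidity of its difference set. Set $\phi = (1+\sqrt{5})/2$ with algebraic conjugate $\bar\phi = (1-\sqrt{5})/2 = -1/\phi$, and define
\[
   \Gamma := \{\, m + n\phi : (m,n)\in\Z^2,\ m + n\bar\phi \in [0,1)\,\}.
\]
For each $n\in\Z$ there is a unique integer $m = \lceil n/\phi\rceil$ with $m + n\bar\phi \in [0,1)$; a direct calculation then shows $\gamma_n = \lceil n/\phi\rceil + n\phi$, with consecutive gaps $\gamma_{n+1}-\gamma_n \in \{\phi,\phi^2\}$. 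In particular $\Gamma$ is separated with $\Delta(\Gamma)=\phi$ and satisfies \eqref{reldense} with $d=\phi^2$.

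The key observation is that $\Gamma-\Gamma$ is itself a model set with bounded internal window: a computation of internal coordinates yields
\[
   \Gamma-\Gamma \;=\; \{\, p + q\phi : (p,q)\in\Z^2,\ |p + q\bar\phi| < 1\,\},
\]
which is uniformly discrete (the Meyer property of cut-and-project sets), and hence closed in $\R$. Given any $\Gamma^{\ast}\in W(\Gamma)$, realized as a weak limit of translates $\Gamma-x_k$, any two points $\gamma_1^\ast,\gamma_2^\ast\in \Gamma^{\ast}$ arise as $\gamma_i^\ast=\lim_k (\gamma_{i,k}-x_k)$ with $\gamma_{i,k}\in \Gamma$; since the shifts $x_k$ cancel in the difference, $\gamma_2^\ast-\gamma_1^\ast = \lim_k (\gamma_{2,k}-\gamma_{1,k}) \in \overline{\Gamma-\Gamma} = \Gamma-\Gamma$. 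Thus $\Gamma^{\ast}-\Gamma^{\ast}\subseteq \Gamma-\Gamma$.

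Finally, suppose for contradiction that $\Gamma^{\ast}$ contains an infinite subset $\{a+k_jb\}_{j\in\N}$ of some progression $a+b\Z$, with $k_j\to\infty$ and $b>0$. Then for each $m\geq 1$, $(k_{j+m}-k_j)\,b \in \Gamma^{\ast}-\Gamma^{\ast} \subseteq \Gamma-\Gamma \subset \Z+\Z\phi$. Writing $b=p+q\phi$ with $(p,q)\in\Z^2$, the internal coordinate of this element equals $(k_{j+m}-k_j)(p+q\bar\phi)$ and must lie in $(-1,1)$ for every $m$. Since $k_{j+m}-k_j\to\infty$, this forces $p+q\bar\phi=0$; the irrationality of $\bar\phi$ gives $p=q=0$, so $b=0$, a contradiction.

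The main obstacle is the closure step $\Gamma^{\ast}-\Gamma^{\ast}\subseteq \Gamma-\Gamma$, which relies on the Meyer (uniform discreteness) property of $\Gamma-\Gamma$ to pass from limits of differences in $\Gamma-\Gamma$ back into $\Gamma-\Gamma$ itself; once this is in place the rigidity of the quadratic module $\Z+\Z\phi$, combined with the boundedness of the internal window of $\Gamma-\Gamma$, propagates cleanly to exclude every infinite arithmetic subprogression from every weak limit.
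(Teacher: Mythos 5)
Your construction is correct, but it takes a genuinely different route from the paper. The paper takes $\Gamma(a,b)=\Zer\bigl(\sin(\pi z+a)-\tfrac12\sin(z+b)\bigr)$, invokes the Skolem--Mahler--Lech theorem (via van der Poorten--Tijdeman) to show each $\Gamma(a,b)$ meets every arithmetic progression finitely often, and controls weak limits by almost-periodicity of the generating function: every element of $W(\Gamma(a,b))$ is again some $\Gamma(a_1,b_1)$, so the property passes to limits. You instead take the Fibonacci model set and replace both ingredients by algebraic rigidity: the Meyer property gives $\Gamma-\Gamma$ uniformly discrete, hence closed, hence $\Gamma^\ast-\Gamma^\ast\subseteq\Gamma-\Gamma$ for every weak limit $\Gamma^\ast$ \emph{without identifying the limits}, and the bounded internal window of $\Gamma-\Gamma$ kills unboundedly large integer multiples of a fixed $b$ via Galois conjugation on $\Q(\sqrt5)$. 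Your argument is more self-contained (no Skolem--Mahler--Lech), and the difference-set trick for taming $W(\Gamma)$ is cleaner than re-identifying the limit sets; the paper's approach, on the other hand, yields a two-parameter family of examples and generalizes to other exponential sums. One small imprecision to repair: from $(k_{j'}-k_j)b\in\Z+\Z\phi$ you may only conclude $b\in\Q(\sqrt5)$, not $b\in\Z+\Z\phi$ (if all $k_j$ are, say, even, then $b=\tfrac12(1+\phi)$ is not excluded at that point). The fix is immediate: fix one nonzero $N_0=k_{j_2}-k_{j_1}$ with $N_0b=p_0+q_0\phi$; since the Galois conjugation $\sigma$ of $\Q(\sqrt5)$ is $\Q$-linear, $\sigma(Nb)=\tfrac{N}{N_0}\,\sigma(N_0b)$ for all $N$ in your infinite set, and boundedness of $\sigma(Nb)\in(-1,1)$ as $|N|\to\infty$ forces $\sigma(N_0b)=0$, hence $b=0$, the desired contradiction.
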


 \begin{proof}
      We use a slightly modified example from \cite{MR4206541}. One may also use the example in ~\cite{MR4129870}.

Given any numbers $a, b\in\R$, set $$h_{a,b}(z):=\sin(\pi z+a)-(1/2) \sin(z+b). $$ Set  $\Gamma(a,b):=$Zer$(h_{a,b})$.  Similarly to \cite{MR4206541}, one can check that for every $a,b$ we have:

(i) $\Gamma(a,b)$ is a separated subset of $\R$;

(ii)  $\Gamma(a,b)$ does not contain any arithmetic progression in $\R$.

We will use the lemma from \cite[Section~4]{MR379387}, which follows from the Skolem-Mahler-Lech theorem. It claims that if a trigonometric polynomial $h$ vanishes on an infinite subset of arithmetic progression $\alpha \Z + \beta, \alpha,\beta\in \R,$ then there exist $r,d \in \R$ such that $h$ vanishes on the full progression $r\Z + d$. 
Together with condition (ii), this implies 

(iii) $\Gamma(a,b)$ meets every arithmetic progression in (at most) finite number of points.

Since $\sin(\pi z+b)$ is a $2$-periodic function, it is clear that  every sequence of real numbers $t_k$ contains a subsequence $s_{j}$ such that the sequence $\sin (\pi (x-s_{j})+a)$ converges (uniformly on compacts) to  $\sin (\pi x+ a_1),$ for some number $ a_1\in\R.$ Similarly, one may find a subsequence $d_{l}$ of $s_{j}$ such that $\sin((x-d_l)+b)$ converges to $\sin(x+b_1).$ It follows that every element of the set  $W(\Gamma(a,b))$ is of the form $\Gamma(a_1,b_1)$. Now, conditions (i) and (iii) above prove the claim. \end{proof}

\section{Stability of shifts and interpolation: Proofs}\label{sec_proofs1}

\subsection{}\label{sect_lemma_above_proof}
Given a countable set $\Lambda\subset\R$, denote by 
\begin{equation}\label{lud}D^-(\Lambda):=\lim_{r\to\infty}\inf_{x\in\R}\frac{\#(\Lambda\cap (x,x+r))}{r}\end{equation}the lower uniform density of $\Lambda$.

We start with a result which extends the right-hand-side inequality in \eqref{eqstab} to more general sets $\Gamma$:

\begin{proposition}\label{lemaa_int_rhs_equiv_general}
Let $\Gamma \subset \bigcup\limits_{i=1}^N (\frac{1}{\alpha_i} \Z + \beta_i), \alpha_i >0, \beta_i \in \R, N\in\N,$ satisfy  the condition
\begin{equation}\label{dGammaZi}
    D^{-}\left(\Gamma \cap \left(\frac{1}{\alpha_i}\Z + \beta_i\right)\right) > 0, \quad \text{for every  }i =1,...,N.
\end{equation}
 Then the following statements are equivalent{\rm:}
\begin{enumerate}
    \item[$(i)$] There exists a positive $C$ such that
    \begin{equation}\label{sever_progressions_Ghat}
        \left\| \sum\limits_{k \in \Z} |\hat{\Ff}(t + \alpha_i k)|^2 \right\|_{L^{\infty}(0,{\alpha_i})} \le C,\quad i =1,...,N;
    \end{equation}
    \item[$(ii)$] There exists $C>0$ such that 
    \begin{equation*}
        \left\|\sum_{\gamma \in \Gamma} c_{\gamma} \Ff(\,\cdot - \gamma) \right\|^2_2 \le C \|\cc\|_2^2,\quad \mbox{for every $\cc=\{c_\gamma\} \in l^2(\Gamma).$}
    \end{equation*}
\end{enumerate}
\end{proposition}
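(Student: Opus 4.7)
Both directions reduce, via Plancherel and the $\alpha_i$-periodization underlying Lemma~\ref{Integer_rhs_cond}, to estimates on each progression $L_i:=\frac{1}{\alpha_i}\Z+\beta_i$ separately.

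For $(i)\Rightarrow(ii)$, I would first disjointify: write $\Gamma=\bigsqcup_{i=1}^N\Gamma'_i$ by assigning each $\gamma\in\Gamma$ to the smallest $i$ with $\gamma\in L_i$. The elementary inequality $\bigl\|\sum_{i=1}^N f_i\bigr\|_2^2\le N\sum_i\|f_i\|_2^2$ reduces the problem to controlling each piece. For $c$ supported on $\Gamma'_i\subset L_i$, the trigonometric sum $P_i(\xi):=\sum_{\gamma\in\Gamma'_i}c_\gamma e^{-2\pi i\xi\gamma}$ is $\alpha_i$-periodic, so Plancherel followed by periodization yields
$$\left\|\sum_{\gamma\in\Gamma'_i}c_\gamma \Ff(\cdot-\gamma)\right\|_2^2=\int_0^{\alpha_i}|P_i(\xi)|^2\Bigl(\sum_{k\in\Z}|\hat\Ff(\xi+\alpha_i k)|^2\Bigr)d\xi\le C\alpha_i\|c\|_{l^2(\Gamma'_i)}^2,$$
combining hypothesis $(i)$ with Parseval. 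Summing over $i$ gives $(ii)$.

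For $(ii)\Rightarrow(i)$, fix $i$, set $\Gamma_i:=\Gamma\cap L_i$ and $K_i:=\alpha_i(\Gamma_i-\beta_i)\subset\Z$, so that $D^{-}(K_i)=:\rho_i>0$. Specializing $(ii)$ to sequences supported on $\Gamma_i$ and running the Fourier computation in reverse gives
$$\int_0^{\alpha_i}g_i(\xi)|P(\xi)|^2\,d\xi\le C'\int_0^{\alpha_i}|P(\xi)|^2\,d\xi\qquad(\ast)$$
for every trigonometric polynomial $P$ with spectrum in $K_i/\alpha_i$, where $g_i(\xi):=\sum_k|\hat\Ff(\xi+\alpha_i k)|^2$. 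I would then argue by contradiction: if $g_i\notin L^\infty(0,\alpha_i)$, then for every $M$ the set $E_M:=\{g_i>M\}$ has positive measure; pick a Lebesgue density point $\xi^*$ of $E_M$ and test $(\ast)$ with the $K_i$-restricted Dirichlet-type kernel
$$P_N(\xi):=\sum_{k\in K_i\cap[-N,N]}e^{2\pi ik(\xi^*-\xi)/\alpha_i}.$$
Since $|2\pi k(\xi-\xi^*)/\alpha_i|\le\pi/6$ whenever $|\xi-\xi^*|\le\alpha_i/(12N)$, every summand has real part at least $\sqrt{3}/2$, so $|P_N(\xi)|^2\ge\tfrac34|K_i\cap[-N,N]|^2$ on that interval. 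Combined with $|K_i\cap[-N,N]|\ge 2\rho_i N$ and $\int_0^{\alpha_i}|P_N|^2\,d\xi=\alpha_i|K_i\cap[-N,N]|$, the peak carries a fraction of the total $L^2$ mass bounded below by a constant depending only on $\rho_i$ and $\alpha_i$, uniformly in $N$. For $N$ large the peak interval lies essentially inside $E_M$ by the density-point property, which plugged into $(\ast)$ forces $M$ to be bounded, contradicting $M\to\infty$.

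The main obstacle is the concentration step in the second direction: one must show that despite confining the spectrum of $P_N$ to the possibly irregular set $K_i\subset\Z$, the kernel $|P_N|^2$ retains Dirichlet-type localization at $\xi^*$ with a quantitative lower bound on the mass carried by a peak of width $\sim 1/N$. The positive lower density hypothesis $D^{-}(K_i)>0$ is precisely what keeps the number of summands proportional to $N$, so that both the peak height and the peak width give the correct asymptotics.
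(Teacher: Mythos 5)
Your proof is correct, and while the direction $(i)\Rightarrow(ii)$ coincides with the paper's argument (split $\Gamma$ over the progressions, pass to the Fourier side, periodize, and use Parseval on each $\alpha_i$-periodic trigonometric sum), your proof of $(ii)\Rightarrow(i)$ takes a genuinely different route. The paper fixes an interval $I$ of length $d<D^-(\Gamma\cap\Z)$ containing a small set $S$ on which the periodization $g_i$ exceeds $N$, and then invokes Lemma~\ref{lem_s1} --- the frame property of the exponential system $\{e^{2\pi i n t}\}_{n\in\Gamma\cap\Z}$ on intervals shorter than the lower density, a Beurling-type result --- to manufacture $l^2$-bounded coefficients whose exponential sum equals $\varepsilon^{-1/2}\mathbf{1}_S$ on $I$; plugging this into the norm identity forces the constant in $(ii)$ to blow up. You instead test the inequality $(\ast)$ directly with a Dirichlet-type kernel restricted to the frequencies $K_i\cap[-N,N]$, centered at a Lebesgue density point of $\{g_i>M\}$, and show that the peak of width $\sim 1/N$ carries a fraction of the total $L^2$ mass bounded below in terms of $\rho_i=D^-(K_i)$ alone; the density-point property then forces $M\lesssim C'/\rho_i$. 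Your argument is more elementary and self-contained, since it replaces the frame theorem for exponentials by a hands-on concentration estimate in which the hypothesis $D^-(K_i)>0$ enters only through the count $\#(K_i\cap[-N,N])\gtrsim\rho_i N$ (your stated bound $2\rho_i N$ should carry an $o(1)$ loss, but this is harmless); the paper's argument is shorter once Lemma~\ref{lem_s1} is granted, and the quantitative dependence on the data is comparable in both. The reduction to $(\ast)$ and the choice of unimodular coefficients (so that $\|\cc\|_2^2=\#(K_i\cap[-N,N])$) are handled correctly, so I see no gap.
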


We will need several lemmas.
\begin{lemma}[Bessel's inequality (see, e.g., \cite{MR1836633}, Chapter~4, Section~3)]\label{bes}
Let $\Gamma$ be a separated set, $a\in \R$. Then
\begin{equation}\label{equ2}
    \int_a^{a+1}\left| \sum_{\gamma\in\Gamma}c_\gamma e^{2\pi i \gamma t}  \right|^2\,dt\leq K \|{\bf c}\|_2^2,\quad \mbox{for every } {\bf c}=\{c_\gamma\}\in l^2(\Gamma),
\end{equation}where the constant $K>0$  depends only on $\Delta(\Gamma)$.   
\end{lemma}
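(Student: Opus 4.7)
The goal is to bound the trigonometric sum $f(t)=\sum_{\gamma\in\Gamma}c_\gamma e^{2\pi i \gamma t}$ in $L^2$ on an arbitrary unit interval by $\|\cc\|_2$ with a constant depending only on $\Delta(\Gamma)$. My plan is the standard Plancherel--Pólya/Ingham approach via a smooth majorant with a band-limited Fourier transform, followed by Schur's test to absorb the off-diagonal interactions using separation.

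First I would reduce to the case $a=0$ by a phase change. Writing $t=u+a$ gives
\begin{equation*}
\int_a^{a+1}\Bigl|\sum_{\gamma\in\Gamma} c_\gamma e^{2\pi i\gamma t}\Bigr|^2\,dt
=\int_0^1\Bigl|\sum_{\gamma\in\Gamma}(c_\gamma e^{2\pi i\gamma a})\,e^{2\pi i\gamma u}\Bigr|^2 du,
\end{equation*}
and since the modified coefficients $c_\gamma e^{2\pi i\gamma a}$ have the same $l^2$ norm as $\cc$, it suffices to establish \eqref{equ2} for $a=0$. This already shows that the constant $K$ will not depend on $a$.

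Next, fix once and for all a Schwartz function $\phi\in S(\R)$ with $\phi(t)\ge 1$ on $[0,1]$ and $\supp\hat\phi\subset[-R,R]$ for some $R>0$; such a $\phi$ exists (take e.g.\ $\phi=|\psi|^2$ where $\hat\psi$ is a nonzero smooth bump of compact support, rescaled so that $\phi\ge 1$ on $[0,1]$). Then
\begin{equation*}
\int_0^1 |f(u)|^2\,du \;\le\; \int_\R |f(u)|^2\phi(u)\,du
\;=\;\sum_{\gamma,\gamma'\in\Gamma} c_\gamma \overline{c_{\gamma'}}\,\hat\phi(\gamma'-\gamma),
\end{equation*}
the last equality being Parseval applied to $e^{2\pi i(\gamma-\gamma')u}\phi(u)$ (there is no convergence issue if we truncate $\cc$ to finite support first and pass to the limit at the end).

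The key observation now is that $\hat\phi(\gamma'-\gamma)=0$ whenever $|\gamma'-\gamma|>R$, and since $\Gamma$ is separated with $\Delta(\Gamma)>0$, for each fixed $\gamma\in\Gamma$ the set $\{\gamma'\in\Gamma:|\gamma'-\gamma|\le R\}$ has cardinality at most $\lfloor 2R/\Delta(\Gamma)\rfloor+1$. Combined with $\|\hat\phi\|_\infty<\infty$, this yields
\begin{equation*}
\sup_{\gamma\in\Gamma}\sum_{\gamma'\in\Gamma} |\hat\phi(\gamma'-\gamma)|
\;\le\; \|\hat\phi\|_\infty\bigl(\lfloor 2R/\Delta(\Gamma)\rfloor+1\bigr)=:K,
\end{equation*}
and the analogous bound with the roles of $\gamma$ and $\gamma'$ swapped. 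Schur's test then shows that the infinite matrix $(\hat\phi(\gamma'-\gamma))_{\gamma,\gamma'\in\Gamma}$ induces a bounded quadratic form on $l^2(\Gamma)$ with norm at most $K$, giving
$\sum_{\gamma,\gamma'} c_\gamma\overline{c_{\gamma'}}\hat\phi(\gamma'-\gamma)\le K\|\cc\|_2^2$ and hence \eqref{equ2}. There is no real obstacle here; the only point requiring a moment of care is the existence of the majorant $\phi$ (hence the dependence of $K$ solely on $\Delta(\Gamma)$ once $\phi$ is fixed once-and-for-all) and the justification of Parseval by approximation from finitely supported coefficients.
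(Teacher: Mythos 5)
Your proof is correct. The reduction to $a=0$ by the phase change $c_\gamma\mapsto c_\gamma e^{2\pi i\gamma a}$ is what makes $K$ independent of $a$; the majorant $\phi=|\psi|^2$ with $\hat\psi$ a nonnegative compactly supported bump works (after rescaling, since $\psi(0)=\int\hat\psi>0$ forces $|\psi|^2\ge 1$ on $[0,1]$ for a suitable dilation), and then $\int_\R|f|^2\phi=\sum_{\gamma,\gamma'}c_\gamma\overline{c_{\gamma'}}\hat\phi(\gamma'-\gamma)$ for finitely supported $\cc$, the counting bound $\#\{\gamma'\in\Gamma:|\gamma'-\gamma|\le R\}\le 2R/\Delta(\Gamma)+1$, and Schur's test give the claim with a constant depending only on $\Delta(\Gamma)$; the passage from finitely supported to general $\cc\in l^2(\Gamma)$ by $L^2$-convergence of partial sums is routine. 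Note that the paper does not prove Lemma~\ref{bes} at all: it is quoted from Young's book, where the classical route is the equivalent sampling inequality \eqref{equ20} for Paley--Wiener functions (a Plancherel--P\'olya-type estimate obtained from a local bound on $|f(\gamma)|^2$ together with Bernstein's inequality). Your band-limited-majorant plus Schur argument is a clean, self-contained alternative that delivers the same statement and the same dependence of the constant solely on the separation.
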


Observe  (see, e.g.,  \cite[Lecture~2.5]{ou}) that \eqref{equ2} is equivalent to the inequality
\begin{equation}\label{equ20}
 \sum_{\gamma\in\Gamma}|f(\gamma)|^2\leq K\|f\|_2^2,  \quad \mbox{for every } f\in PW_{[a,a+1]}.
\end{equation}

\begin{lemma}\label{lem_s1}
    Assume $\Gamma\subset\Z$ is  a separated set satisfying $D^-(\Gamma)>0.$ Let $I\subset (0,1)$ be an interval satisfying $|I|<D^-(\Gamma)$. Then there exists  $K$ such that every function $h\in L^2(I)$
    admits a representation
    $$h(t)=\sum_{n\in\Gamma}a_ne^{2\pi i n t},\quad  \sum_{n\in\Gamma}|a_n|^2\leq K\|h\|_2^2.$$
\end{lemma}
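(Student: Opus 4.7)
The plan is a standard duality argument reducing the claim to Beurling's classical sampling theorem for Paley--Wiener spaces. I would introduce the synthesis operator
$$T:l^2(\Gamma)\to L^2(I),\qquad T(\{a_n\})(t)=\sum_{n\in\Gamma}a_n e^{2\pi i n t},$$
which is bounded by Bessel's inequality (Lemma \ref{bes}). The conclusion of Lemma \ref{lem_s1} is exactly the statement that $T$ is surjective and admits a bounded right inverse; by the open mapping theorem this is equivalent to the adjoint $T^*:L^2(I)\to l^2(\Gamma)$ being bounded below.

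A direct computation gives $(T^*h)_n=\int_I h(t)e^{-2\pi i n t}\,dt$. Extending $h$ by zero to all of $\R$ and writing $g:=\hat h$, Plancherel yields $\|g\|_{L^2(\R)}=\|h\|_{L^2(I)}$, and the classical Paley--Wiener theorem places $g\in PW_I$; moreover $(T^*h)_n=g(n)$. Thus the required lower bound on $T^*$ reduces to the sampling inequality
$$\sum_{n\in\Gamma}|g(n)|^2\geq c\,\|g\|_2^2,\qquad g\in PW_I,$$
for some $c>0$.

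This sampling inequality is the content of Beurling's theorem: since $\Gamma$ is separated (being a subset of $\Z$) and $D^-(\Gamma)>|I|$, the set $\Gamma$ is a sampling sequence for $PW_I$, where a trivial modulation reduces an arbitrary interval $I$ of length $|I|$ to one centered at the origin (modulation preserves both $|g(\gamma)|$ and $\|g\|_2$, and maps $PW_I$ to $PW_{I-\xi_0}$). The constant $K$ in the statement of the lemma can then be taken as $1/c$. I do not anticipate any real obstacle; the entire argument is routine once Beurling's theorem is invoked, and the main bookkeeping is the Fourier-analytic identification of the adjoint and checking the Paley--Wiener assignment $h\mapsto g$ is an isometry onto $PW_I$.
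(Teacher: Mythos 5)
Your argument is correct and is essentially the paper's: the paper disposes of the lemma by citing that $\{e^{2\pi i nt}\}_{n\in\Gamma}$ is a frame in $L^2(I)$ when $D^-(\Gamma)>|I|$, and your duality chain (surjectivity of the synthesis operator $\Leftrightarrow$ lower bound for $T^*$ $\Leftrightarrow$ the $L^2$ sampling inequality for $PW_I$, supplied by Beurling's theorem) is precisely the standard proof of that frame property, with the Bessel bound playing the role of the upper frame bound. No gaps; only a minor bookkeeping point that with your normalization the constant is $K=1/c$ coming from $\|a\|_2^2\le\langle (TT^*)^{-1}h,h\rangle$, as you state.
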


This lemma follows from the fact that the exponential system $\{e^{2\pi i n t}:n\in\Gamma\}$ forms a frame in $L^2(I)$, see, e.g., \cite[Lecture~3]{ou}.

\begin{proof}[Proof of Proposition \ref{dGammaZi}]
For simplicity of presentation, we give a proof for the case $\Gamma \subset \Z \cup (\sqrt{2} \Z)$. Note that we do not require  $\Gamma$ to be separated.
Set $\Gamma_1:= \Gamma \cap \Z, \Gamma_2:= \Gamma \cap (\sqrt{2}\Z), $ and 
$$
f(x) := \sum_{\gamma \in \Gamma} c_{\gamma} \Ff(x - \gamma) = \sum_{n \in \Gamma_1} a_{n} \Ff(x - n) + \sum_{m \in \Gamma_2} b_{m} \Ff(x -m)=:f_1(x)+f_2(x),$$ where ${\bf a}=\{a_n\} \in l^2(\Z), {\bf b}=\{b_m\}\in l^2(\sqrt2\,\Z). 
$

$(i) \rightarrow (ii)$.
Observe that $\sum\limits_{n \in \Z} a_n e^{2 \pi i n t}$ is a $1$-periodic function. Therefore, 
passing to the Fourier transform, we have $$
\|f_1\|^2=\|\hat f_1\|_2^2=  \int\limits_{\R} \left|\sum\limits_{n \in \Z} a_{n} e^{2\pi i n t}\right|^2 |\hat{\Ff}(t)|^2 \, dt =$$ \begin{equation}\label{eqst}\int\limits_{0}^1 \left|\sum\limits_{n \in \Z} a_{n} e^{2\pi i n t}\right|^2 \sum_{k \in \Z}|\hat{\Ff}(t + k)|^2 \, dt \le  \|{\bf a}\|_2^2 \ess_{t\in(0,1)}\sum_{k\in\Z}|\hat\Ff(t+k)|^2. \end{equation}
A similar estimate holds for the second sum. 
This and \eqref{sever_progressions_Ghat} give
$$
\|f\|^2_2 \leq 2( \|\hat{f_1}\|^2_2+\|\hat{f_2}\|^2_2)  \le C\|\cc\|^2_2.
$$

$(ii) \rightarrow (i)$. We assume that condition $(i)$ is not true, and prove that $(ii)$ is not true either.

Fix a number $d$ satisfying $0<d<D^-(\Gamma\cap\Z)$.
By our assumption, we may assume that  for every $N$ there exist $\varepsilon>0$ and a set $S\subset (0,1),|S|=\varepsilon,$ such that $$ \sum\limits_{k \in \Z} \left|\hat{\Ff}\left(t + k \right)\right|^2  \ge N, \quad t\in S.$$ Moreover, we may assume that $S$ is a subset of an interval $I\subset(0,1)$ of length $d$. 

By Lemma \ref{lem_s1}, we may find coefficients ${\bf a}=\{a_n\}\in l^2(\Gamma\cap\Z)$ such that $\|{\bf a}\|_2\leq K$ and
$$\sum_{n\in\Gamma}a_ne^{2\pi i n t}=\frac{1}{\sqrt\varepsilon}{\bf 1}_S(t),\quad t\in I.$$
Put
$$f(x):=\sum_{n\in\Gamma\cap\Z}a_n \Ff(x-n).$$Then
$$\|f\|_2^2=\|\hat f\|_2^2=\int_0^1 \left|\sum_{n\in\Gamma\cap\Z}a_ne^{2\pi i n t}\right|^2\sum_{k\in\Z}\left|\hat\Ff(t+k)\right|^2\,dt\geq $$
$$\int_S \left|\sum_{n\in\Gamma\cap\Z}a_ne^{2\pi i n t}\right|^2\sum_{k\in\Z}\left|\hat\Ff(t+k)\right|^2\,dt\ge N.$$

We conclude that the constant $C$ in  $(ii)$ cannot be less than $N/K$.
\end{proof}

\subsection{Proof of Lemma \ref{Integer_rhs_cond}} The right-hand-side inequality in 
\eqref{eqstab}  is proved in \eqref{eqst}. The sharpness of this inequality can be proved similarly to the second part of the proof above, where instead of Lemma \ref{lem_s1} one uses the fact that the exponentials $\{e^{2\pi i n t}\}$ form an orthonormal basis for $L^2(0,1)$. The proof of the left-hand-side inequality in \eqref{eqstab} and its sharpness is similar.

\subsection{Proof of Lemma ~\ref{lemma_above_est}}
We use Lemma \ref{bes} and  pass  to the Fourier transform:
$$
 \left\|\sum_{\gamma \in \Gamma} c_{\gamma} \Ff(\,\cdot - \gamma) \right\|^2_2 = \sum_{k \in  \Z} \int\limits_{k}^{k+1} \left|\sum\limits_{\Gamma} c_{\gamma} e^{2\pi i \gamma t}\right|^2 |\hat{\Ff}(t)|^2 \, dt  \le K \| |\hat{\Ff}|^2\|_W \|\cc\|^2_2.
$$

\subsection{Proof of Theorem \ref{t3}} 

 We will need several auxiliary lemmas.
\begin{lemma}\label{l1}
    Assume $\varphi\in W$. Then
    $$\liminf_{N\to\infty}N^{-1}\#\{k\in\N: \|\varphi\|_{L^\infty(k,k+1)}>1/N\}=0.$$
\end{lemma}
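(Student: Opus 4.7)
The plan is to reduce the problem to the elementary fact that a non-negative summable sequence, after non-increasing rearrangement, decays faster than $1/n$. Set $a_k := \|\varphi\|_{L^\infty(k,k+1)}$. The hypothesis $\varphi \in W$ gives $\sum_{k\in\N} a_k < \infty$, so let $\{b_n\}_{n\in\N}$ denote the non-increasing rearrangement of $\{a_k\}_{k\in\N}$. Then $n b_n \to 0$, which follows from the tail estimate
\[
\tfrac{n}{2}\, b_n \;\le\; \sum_{i=\lceil n/2\rceil}^{n} b_i \;\longrightarrow\; 0,
\]
the right-hand side being a tail of a convergent series.

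Next, I would write $A_N := \#\{k\in\N : a_k > 1/N\}$ and note that the count is unchanged under rearrangement, so $A_N = \#\{n\in\N : b_n > 1/N\}$. Because $b_n$ is non-increasing, this is simply the largest $n$ with $b_n > 1/N$ (and $A_N = 0$ if no such $n$ exists). Now I would split into cases. If $A_N$ stays bounded as $N\to\infty$, then clearly $N^{-1} A_N \to 0$. Otherwise, since $N \mapsto A_N$ is non-decreasing, $A_N \to \infty$; setting $M := A_N$, the defining property $b_M > 1/N$ gives
\[
\frac{A_N}{N} \;=\; \frac{M}{N} \;<\; M\, b_M,
\]
and the right-hand side tends to $0$ as $N \to \infty$ (and hence $M\to\infty$) by the fact established above.

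In both cases one actually obtains the full limit $N^{-1} A_N \to 0$, which is stronger than the stated $\liminf$ conclusion. I do not foresee any serious obstacle: the argument is purely combinatorial/analytic and uses no structure of $\varphi$ beyond the summability $\sum_k a_k < \infty$ encoded in $\varphi \in W$. The only point that requires a line of care is handling the case where $A_N$ is bounded (e.g.\ when $\varphi$ has compact essential support), which is trivial.
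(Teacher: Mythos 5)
Your proof is correct and takes essentially the same route as the paper: both arguments reduce the claim to the fact that the non-increasing rearrangement of an $\ell^1$ sequence decays faster than $1/n$. You carry the counting argument out in more detail and obtain the full limit $N^{-1}A_N\to 0$ rather than just the stated $\liminf$, which is a harmless strengthening.
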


\begin{proof}  The proof is standard. Set 
$$a_n:=\|\varphi\|_{L^\infty(n-1,n)}+\|\varphi\|_{L^\infty(-n,-n+1)},\quad n=1,2,...$$Then $\{a_n\}$ belongs to $l^1$, and the same is true for its   non-increasing rearrangement  $\{a_n^\ast\}_{n=1}^\infty$.
Therefore, for every $\epsilon>0$ there are infinitely many numbers $k$ such that $a_k^\ast<\epsilon/k$, which proves the lemma. \end{proof}

We also need the following well-known

\begin{lemma}[see, e.g., \cite{ou}, Proposition~4.6]\label{lint}
  A separated set  $\Gamma$ is an interpolation set for $PW_S$ if and only if there exists  $K>0$ such that\begin{equation}\label{sta}\int\limits_{S}\left|\sum_{\gamma\in\Gamma}c_\gamma e^{2\pi i \gamma t}\right|^2\,dt\geq K\|\cc\|^2_{l^2},\quad  \mbox{for every } \cc=\{c_\gamma\}\in l^2(\Gamma). \end{equation}
\end{lemma}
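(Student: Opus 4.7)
The plan is to derive the equivalence from a duality argument in Hilbert spaces. First, I would identify $PW_S$ with $L^2(S)$ via the Fourier isometry: every $f \in PW_S$ is uniquely of the form $f = \hat h$ with $h \in L^2(\R)$ supported in $S$, and $\|f\|_{L^2(\R)} = \|h\|_{L^2(S)}$ by Plancherel. Under this identification, the sampling operator $T\colon PW_S \to l^2(\Gamma)$, $f \mapsto (f(\gamma))_{\gamma \in \Gamma}$, transports to $\tilde T\colon L^2(S) \to l^2(\Gamma)$, $\tilde T h = \bigl(\int_S h(t) e^{-2\pi i \gamma t}\,dt\bigr)_{\gamma \in \Gamma}$.

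Next, I would verify that $\tilde T$ is bounded, so that the interpolation property becomes equivalent to surjectivity of $\tilde T$ onto $l^2(\Gamma)$. For $S$ contained in a bounded interval this follows from Lemma \ref{bes} after a dilation adjusting the length of the interval; for general $S$ with $|S| < \infty$, the same conclusion is obtained by decomposing $S$ into pieces lying in unit intervals and combining Lemma \ref{bes} with an almost-orthogonality argument based on the separation of $\Gamma$.

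By the closed-range theorem in Hilbert spaces, $\tilde T$ is surjective if and only if its Hilbert-space adjoint $\tilde T^*$ is bounded below: there exists $K_0 > 0$ such that $\|\tilde T^* \cc\|_{L^2(S)} \geq K_0 \|\cc\|_{l^2(\Gamma)}$ for every $\cc \in l^2(\Gamma)$. A direct computation
\[
\langle \tilde T h, \cc\rangle_{l^2(\Gamma)} = \sum_{\gamma \in \Gamma}\overline{c_\gamma}\int_S h(t)e^{-2\pi i \gamma t}\,dt = \int_S h(t)\,\overline{\sum_{\gamma \in \Gamma} c_\gamma e^{2\pi i \gamma t}}\,dt
\]
identifies $\tilde T^* \cc$, regarded as an element of $L^2(S)$, with the restriction of the series $\sum_{\gamma \in \Gamma} c_\gamma e^{2\pi i \gamma t}$ to $S$. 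The bounded-below condition then reads exactly as \eqref{sta}, with $K = K_0^2$.

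The main obstacle I anticipate is verifying boundedness of $\tilde T$ when $S$ is unbounded of finite measure, since this goes beyond a direct application of Lemma \ref{bes}, which handles intervals; it requires a somewhat careful decomposition and an almost-orthogonality argument. A related technical point is to make sense of the formal series $\sum_{\gamma} c_\gamma e^{2\pi i \gamma t}$ as a bona fide element of $L^2(S)$, which is guaranteed by the same Bessel-type estimate used for $\tilde T$.
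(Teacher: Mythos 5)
The paper offers no proof of this lemma (it is quoted from \cite{ou}), so I can only judge your argument on its own terms. Your reduction is correct for \emph{bounded} $S$: there the sampling operator $\tilde T$ is indeed bounded (Lemma \ref{bes} in the form \eqref{equ20}, after a dilation), interpolation becomes surjectivity of $\tilde T$, your computation of $\tilde T^*$ is right, and ``surjective iff adjoint bounded below'' yields \eqref{sta}. The gap is your second step: for a general set $S$ of finite measure the operator $\tilde T$ is \emph{not} bounded, and no almost-orthogonality argument can rescue this, because the Bessel bound you are trying to establish is simply false. Take $\Gamma=\Z$ and $S=\bigcup_{k\in\Z}[k,\,k+2^{-|k|}]$, so $|S|=3$. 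For the normalized Dirichlet-type polynomial $P_N(t)=N^{-1/2}\sum_{n=1}^N e^{2\pi i nt}$ one has $\|\cc\|_2=1$, while by $1$-periodicity $\int_S|P_N|^2=\sum_k\int_0^{2^{-|k|}}|P_N|^2\gtrsim\log N$, since $|P_N(t)|^2\gtrsim N$ for $|t|\le 1/(2N)$ and roughly $2\log_2(2N)$ of the pieces have length at least $1/(2N)$. Hence $\sup_{\|\cc\|_2=1}\|\tilde T^*\cc\|_{L^2(S)}=\infty$, the exponentials restricted to $S$ do not form a Bessel system, and the closed-range theorem for bounded operators does not apply.

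The standard route (and, as far as I can tell, the one behind \cite[Proposition~4.6]{ou} and the moment-problem theorems in \cite{MR1836633}) avoids boundedness of $\tilde T$ altogether. For ``\eqref{sta} implies interpolation'' one solves the moment problem $\langle h,e_\gamma\rangle=\overline{c_\gamma}$ by defining the functional $\sum a_\gamma e_\gamma|_S\mapsto\sum a_\gamma\overline{c_\gamma}$ on the linear span of the exponentials, which \eqref{sta} for finite sequences shows to be bounded by $K^{-1/2}\|\cc\|_2$, and then extending by Hahn--Banach and representing by Riesz. For the converse one first upgrades ``an interpolant exists'' to ``an interpolant of norm at most $M\|\cc\|_2$ exists'' via the closed graph theorem applied to the map $\cc\mapsto f+N$ into $PW_S/N$, where $N=\{f\in PW_S:\ f|_\Gamma=0\}$ is closed because point evaluations on $PW_S$ are bounded when $|S|<\infty$; then for a finite sequence $\ab$ with interpolant $f=\hat h$ one writes $\|\ab\|_2^2=\sum_\gamma\overline{a_\gamma}f(\gamma)=\langle h,\sum_\gamma a_\gamma e_\gamma\rangle_{L^2(S)}\le M\|\ab\|_2\,\|\sum_\gamma a_\gamma e_\gamma\|_{L^2(S)}$, which is \eqref{sta} with $K=M^{-2}$. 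I would add that for the sets $S=\{|\hat\Ff|>r\}$ actually used in the paper, the hypothesis $|\hat\Ff|^2\in W$ forces $S$ to be bounded up to a null set, so your argument would suffice for those applications; but as a proof of the lemma as stated it has a real hole.
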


    \begin{proof}[Proof of Theorem \ref{t3}]
        (i) Assume $\Gamma$ is  an interpolation set for $PW_{\{|\hat\Ff|>r\}}$, for some $ r>0.$ 
       Using \eqref{sta},  for every           $f(x)$ in \eqref{f0}  (with $p=2$)  we get
$$\|f\|_2^2=\|\hat f\|_2^2=\int\limits_{\R}\left|\sum_\Gamma c_\gamma e^{2\pi i \gamma t}\right|^2|\hat\Ff(t)|^2\,dt\geq r^2\int\limits_{\{|\hat\Ff|> r\}}\left|\sum_\Gamma c_\gamma e^{2\pi i \gamma t}\right|^2\,dt\geq K r^2 \|\cc\|_{2}^2.$$
Since $\Ff$ satisfies condition~\eqref{e1} from Lemma~\ref{lemma_above_est}, we see that $\|f\|_2 \le C \|\cc\|_2$, and the $l^2$-stability of $\Gamma$-shifts follows. 

(ii)  
Assume that $\Gamma$ is not an interpolation set for $PW_{\{|\hat\Ff|>r\}},$ for every $r>0$. We will check that $\Gamma$-shifts of $\Ff$ are not $l^2$-stable. 

By Lemma \ref{l1},
there is a sequence  ${\bf \epsilon}=\epsilon_k\to 0$  such that for every $\epsilon$ there is an integer $N$ such that $|\hat\Ff(t)|^2<\epsilon/N$ outside  a set $S_N$ which consists of  at most $ N$ intervals $[k,k+1)$. Since $\Gamma$ is not an interpolation set  for $PW_{\{|\hat\Ff|^2>\epsilon/N\}}$, by Lemma \ref{lint}, there are coefficients $\cc=\{c_\gamma\}$ satisfying $\|\cc\|_{2}=1$ and 
$$\int\limits_{\{|\hat\Ff|^2>\epsilon/N\}}\left|\sum_\Gamma c_\gamma e^{2\pi i \gamma t}\right|^2|\hat\Ff(t)|^2\,dt<\|\hat\Ff\|_\infty^2\int\limits_{\{|\hat\Ff|^2>\epsilon/N\}}\left|\sum_\Gamma c_\gamma e^{2\pi i \gamma t}\right|^2\,dt<\epsilon.$$

Let  $f$ be any function defined in (\ref{f0}). Then
$$\|f\|_2^2=\int_\R \left|\sum_\Gamma c_\gamma e^{2\pi i \gamma t}\right|^2|\hat\Ff(t)|^2\,dt=\int_{\{|\hat\Ff|^2>\epsilon/N\}}+\int_{S_N\setminus \{|\hat\Ff|^2>\epsilon/N\}}+\int_{\R\setminus S_N}.$$ The first integral above is less than $\epsilon.$ By (\ref{equ2}), the second one satisfies
$$\leq \frac{\epsilon}{N}\int_{S_N}\left|\sum_\Gamma c_\gamma e^{2\pi i \gamma t}\right|^2\,dt=\frac{\epsilon}{N}\sum_{k\in S_N}\int_{k}^{k+1}\left|\sum_\Gamma c_\gamma e^{2\pi i \gamma t}\right|^2\,dt\leq K\epsilon.$$  By \eqref{equ2}, 
$$\int_{\R\setminus S_N}\leq K \sum_{k\not\in S_N}\||\hat\Ff|^2\|_{L^{\infty}(k,k+1)}.$$
Since $|\hat\Ff|^2\in W$
the last integral tends to zero as $\epsilon\to0.$       Therefore, we may choose coefficients ${\bf c}=\{c_\gamma\}$,
 $\|{\bf c}\|_2=1$, so that the $L^2$-norm of the corresponding function $f$ will be arbitrarily small.
\end{proof}

\subsection{Proof of Theorem \ref{t1}}

We will need an important result from \cite{MR3336091}:
\begin{lemma}\label{spec_invar}
  Let $\Lambda$ and $\Gamma$ be separated sets in $\R$. Let $A \in \Cc^{\Lambda \times \Gamma}$ be a matrix such that
  \begin{equation}\label{conv_domin}
        |A_{\lambda, \gamma}| \le \theta(\lambda - \gamma) \quad \lambda \in \Lambda, \gamma \in \Gamma \quad \text{for some  } \theta \in W.
  \end{equation}
  Assume that there exist a $p_0 \in [1,\infty]$ and $C_0 > 0,$ such that
\begin{equation}\label{e00}\|A\cc\|_{p_0} \geq  C_0\|\cc\|_{p_0}\quad \mbox{for all } \cc\in l^{p_0}(\Gamma).\end{equation}
Then there exists a constant $C > 0$ independent of $p$ such that, for all $p \in [1,\infty]$,
\begin{equation}\label{e01}\|A\cc\|_p \geq C\|\cc\|_p,\quad \mbox{for all } \cc\in l^p(\Gamma).\end{equation}
\end{lemma}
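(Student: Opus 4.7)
The plan is to reduce the statement to a Wiener-type inverse-closedness theorem for the Banach algebra of matrices whose entries are dominated by $\theta(\gamma-\gamma')$ with $\theta\in W$. Such algebras have been studied in the tradition of Sj\"ostrand, Baskakov, Gr\"ochenig--Leinert, and Shin--Sun, among others; \cite{MR3336091} supplies the precise version adapted to operators indexed by separated subsets of $\R$ that is needed here. Concretely, I form the square operator $B=A^*A$, convert \eqref{e00} into invertibility of $B$ on $l^{p_0}(\Gamma)$, apply the inverse-closedness theorem to obtain invertibility of $B$ on every $l^p(\Gamma)$, and read off the lower bound \eqref{e01} for $A$.

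The first step is to check that $B$ lies in a $W$-dominated matrix algebra over $\Gamma$. From $|A_{\lambda,\gamma}|\leq\theta(\lambda-\gamma)$ and the separatedness of $\Lambda$, the entries $B_{\gamma,\gamma'}=\sum_{\lambda\in\Lambda}\overline{A_{\lambda,\gamma}}A_{\lambda,\gamma'}$ satisfy a pointwise estimate $|B_{\gamma,\gamma'}|\leq\tilde\theta(\gamma-\gamma')$ for some $\tilde\theta\in W$; the construction of $\tilde\theta$ uses that $W$ is a convolution algebra and that a sum over a separated set can be majorized by an integral against a bounded-overlap partition of unity. A Schur/Young estimate then shows that any such $W$-dominated matrix is bounded on every $l^p$ with operator norm $\lesssim\|\tilde\theta\|_W$, \emph{uniformly in} $p$; the analogous bound holds for $A$ and $A^*$ with $\theta$ in place of $\tilde\theta$. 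Using hypothesis \eqref{e00} and this uniform boundedness, one deduces that $B$ is bounded below on $l^{p_0}(\Gamma)$: for $p_0=2$ this is immediate via $\|A\cc\|_2^2=\langle B\cc,\cc\rangle\leq\|B\cc\|_2\|\cc\|_2$, while for general $p_0$ one exploits the formal self-adjointness of $B$ together with $l^{p_0}$--$l^{p_0'}$ duality within the $W$-dominated framework to upgrade the one-sided bound to invertibility of $B$ on $l^{p_0}$.

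The deep step is spectral invariance, which states that the $W$-dominated matrix algebra on $\Gamma$ is inverse-closed: if such a matrix $B$ is invertible on some $l^{p_0}$, then $B^{-1}$ is itself $W$-dominated and hence extends to a bounded operator on every $l^p(\Gamma)$, $1\leq p\leq\infty$, with norm independent of $p$. This is precisely the content extracted from \cite{MR3336091}. Once this is available, for any $\cc\in l^p(\Gamma)$,
\[\|\cc\|_p=\|B^{-1}A^*A\cc\|_p\leq\|B^{-1}\|_{l^p\to l^p}\,\|A^*\|_{l^p\to l^p}\,\|A\cc\|_p,\]
yielding \eqref{e01} with a constant $C$ independent of $p$. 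The main obstacle is the inverse-closedness step itself, whose proof requires either a Hulanicki-type argument (approximating $B^{-1}$ by polynomials in $B$ inside the algebra via holomorphic functional calculus) or a direct construction in the spirit of Sj\"ostrand's original work, together with interpolation across the $l^p$ scale; the remaining steps are structural bookkeeping inside the algebra.
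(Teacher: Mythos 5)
First, a point of comparison: the paper does not actually prove Lemma~\ref{spec_invar}; it imports it verbatim from \cite{MR3336091} (the proof environment that follows the lemma in the source is the proof of Theorem~\ref{t1}, which merely \emph{applies} the lemma to the Gramian $G$ with $p_0=2$). Your proposal is therefore being measured against the cited literature rather than against an in-paper argument, and in that light your overall route --- pass to $B=A^*A$, check that $B$ lies in the $W$-dominated matrix algebra over $\Gamma$ via the convolution/Schur estimates, invoke inverse-closedness (spectral invariance) of that algebra, and read off the $p$-uniform lower bound for $A$ --- is indeed the standard Sj\"ostrand/Baskakov/Gr\"ochenig--Leinert mechanism that underlies the result. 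For $p_0=2$, which is the only case the paper uses, your argument is essentially complete modulo the quoted Wiener-type lemma, and the bookkeeping (Schur bounds uniform in $p$, $\langle B\cc,\cc\rangle=\|A\cc\|_2^2$, the final chain $\|\cc\|_p\le\|B^{-1}\|\,\|A^*\|\,\|A\cc\|_p$) is correct.

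There is, however, a genuine gap for general $p_0\in[1,\infty]$, which the lemma as stated requires. The sentence claiming that one can ``upgrade the one-sided bound to invertibility of $B$ on $l^{p_0}$'' by ``formal self-adjointness together with $l^{p_0}$--$l^{p_0'}$ duality'' does not work as written: the hypothesis \eqref{e00} makes $A$ injective with closed range in $l^{p_0}(\Lambda)$ and, by duality, makes $A^*$ \emph{surjective} from $l^{p_0'}(\Lambda)$ onto $l^{p_0'}(\Gamma)$, but it gives no lower bound for $A^*$ restricted to the range of $A$ inside $l^{p_0}(\Lambda)$, which is what invertibility of $B=A^*A$ on $l^{p_0}$ would require. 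The literature closes this gap differently: either by showing that for matrices with $W$-type off-diagonal decay the $\ell^p$-stability condition is equivalent to a $p$-independent ``uniform local stability'' condition (Aldroubi--Baskakov--Krishtal, Shin--Sun, Tessera), or by first proving that boundedness below on some $l^{p_0}$ self-improves to boundedness below on $l^2$ before applying spectral invariance. Relatedly, invoking inverse-closedness starting from invertibility on $l^{p_0}\neq l^2$ silently uses that the spectrum of $B$ is independent of $p$ across the $\ell^p$ scale --- true for these algebras, but an additional nontrivial input that should be named. As it stands, your proof establishes the lemma only for $p_0=2$.
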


\begin{proof}
    
We divide the proof into steps.

{\bf Step~1.} Denote by $T_{\gamma}$ the translation operator $T_{\gamma}\Ff (\cdot)= \Ff( \cdot - \gamma)$.  We introduce the operator $D$ acting on the space of sequences  by the formula
\begin{equation}
    D \, \cc = D\{c_{\gamma}\}_{\gamma \in \Gamma} := \sum_{\gamma \in \Gamma} c_{\gamma} T_{\gamma} \Ff.
\end{equation}
It is easy to check (see e.g., \cite[Lemma~8.1]{uz}) that $D$ maps $l^p(\Gamma)$ to $L^p(\R)$ for every $1 \le p \le \infty.$

{\bf Step~2.} Next, we consider the operator $\mathcal{C}$ that is dual to $D$. Note that
$$
\mathcal{C}f = \left( \langle f, T_{\gamma } \Ff\rangle \right)_{\gamma \in \Gamma}
$$
and $\mathcal{C}$ is a bounded operator acting from $L^p(\R)$ to $l^p(\Gamma)$ for every $1 \le p \le \infty.$

{\bf Step~3.} The composition operator $\mathcal{C}D$ is given by
$$
\mathcal{C} D \, \cc = \mathcal{C}D \, \{c_{\gamma}\}_{\gamma \in \Gamma} = \left( \sum\limits_{
\gamma \in \Gamma} c_{\gamma} \langle T_{\gamma} \Ff, T_{\gamma'} \Ff \rangle \right)_{\gamma' \in \Gamma} =  G \, \cc,
$$
where by $G$ we denoted the Gramian matrix with entries $G_{\gamma, \gamma'} = \langle T_{\gamma}\Ff, T_{\gamma'}\Ff \rangle$.
Note that $\mathcal{C} D$ is also bounded on $l^p(\Gamma)$ for every  $1 \le p \le \infty.$
Moreover, since
\begin{equation}
G_{\gamma, \gamma'} = (\Ff \ast \tilde{\Ff}) (\gamma - \gamma'), \quad \text{ where }  \tilde\Ff(x)=\bar{\Ff}(-x),
\end{equation}
the matrix $G$ satisfies condition~\eqref{conv_domin}.

{\bf Step~4.} Since $\Gamma$ shifts of $\Ff$ are $l^2$-stable, we know that $\{ T_{\gamma}\Ff : \gamma \in \Gamma \}$ is a Riesz basis for $V^2_{\Gamma}(\Ff)$,  therefore its Gramian is invertible on $l^2(\Gamma),$ i.e. $\|G \cc\|_2 \ge C\|\cc\|_2.$

Hence, the assumptions of Lemma~\ref{spec_invar} are satisfied, and we deduce that $G$ is invertible simultaneously for all $p \in [1, \infty].$

Combining all together, we arrive at
$$
C_1 \|\cc\|_p \le \|G \cc\|_p = \|\mathcal{C}D \cc\|_p \le C_2 \|D \cc\|_p =C_2 \left\|\sum\limits_{\gamma} c_{\gamma} \Ff(\cdot - \gamma)\right\|_p \le C_3 \|\cc\|_p, 
$$
where by $C_1, C_2, C_3$ we denoted some positive constants.
Thus, $\Gamma$-shifts of $\Ff$ are $l^p$-stable for every $p \in [1, \infty].$
\end{proof}

\section{Stability of shifts and crystalline measures: Proofs}
We say that 
 $\Gamma$-shifts of $\Ff$ are  $c_0$-stable, if condition \eqref{stab} with $p= \infty$ holds true for  every sequence ${\bf c}\in c_0(\Gamma).$ Similarly, 
 $\Gamma$-shifts of $\Ff$ are  $c_{00}$-stable, if condition \eqref{stab} with $p =\infty$ holds true for  every {\it finite} sequence ${\bf c}.$ 

 Theorem \ref{t5} follow easily from
\begin{theorem}\label{schwartz_approx_theorem}
Assume that $\Gamma$ is a separated set, $\Ff\in W\cap C(\R)$ and satisfies \eqref{locfin}.
   The following statements are equivalent:
\begin{enumerate}
    \item[$(i)$] $\Gamma$-shifts of $\Ff$ are not $l^\infty$-stable{\rm;}
    \item[$(ii)$] $\Gamma$-shifts of $\Ff$ are not $c_0$-stable{\rm;}
    \item[$(iii)$] $\Gamma$-shifts of $\Ff$ are not $c_{00}$-stable{\rm;}
    \item[$(iv)$] There is a set $\Gamma^\ast\in W(\Gamma)$ such that $\Gamma^\ast$-translates of $\Ff$ are not $l^\infty$-independent, i.e. there are non-trivial coefficients $\{c_{\gamma^\ast}\}\in l^\infty(\Gamma^{\ast})$ such that
     \begin{equation}\label{cF_v}
         \sum_{\gamma^\ast\in\Gamma^\ast}c_{\gamma^\ast}\Ff(x-\gamma^\ast)\equiv0;
     \end{equation}
    \item[$(v)$] There is a set $\Gamma^\ast\in W(\Gamma)$ and a sequence ${\bf c}=\{c_{\gamma^\ast}\}\in l^\infty(\Gamma^\ast)$ such that  measure $$\mu=\sum_{\gamma^\ast\in\Gamma^\ast}c_{\gamma^\ast}\delta_{\gamma^\ast}$$ is a sparse crystalline measure whose spectrum lies in ${\rm Zer}(\hat\Ff)$.
\end{enumerate}
\end{theorem}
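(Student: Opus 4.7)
My plan is to prove the equivalences via the cycle $(iii) \Rightarrow (ii) \Rightarrow (i) \Rightarrow (iv) \Rightarrow (iii)$, together with the equivalence $(iv) \Leftrightarrow (v)$ obtained separately. The chain $(iii) \Rightarrow (ii) \Rightarrow (i)$ is immediate from the inclusions $c_{00} \subset c_0 \subset l^\infty(\Gamma)$: if the lower stability bound fails on a smaller subspace, it fails on any larger one.

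For $(i) \Rightarrow (iv)$, I would use a standard compactness argument. Failure of $l^\infty$-stability yields a sequence $\cc^{(k)} \in l^\infty(\Gamma)$ with $\|\cc^{(k)}\|_\infty = 1$ and $\left\|\sum_\gamma c^{(k)}_\gamma \Ff(\cdot - \gamma)\right\|_\infty \to 0$; pick $\gamma_k \in \Gamma$ with $|c^{(k)}_{\gamma_k}| \geq 1/2$. By Lemma~\ref{lsep}, the translates $\Gamma - \gamma_k$ subconverge weakly to some separated $\Gamma^\ast$, and a diagonal argument along matched points produces a limit $\cc^\ast \in l^\infty(\Gamma^\ast)$ with $|c^\ast_0| \geq 1/2$. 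Passing to the limit in $\sum_\gamma c^{(k)}_{\gamma+\gamma_k}\Ff(y-\gamma)$ (which is the translate of the corresponding sum by $\gamma_k$) is justified by the Wiener-amalgam tail bound $\sum_{|\gamma|>R}|\Ff(y-\gamma)| \to 0$, uniform in $k$ because $\Delta(\Gamma-\gamma_k) = \Delta(\Gamma)$; the limit is $\sum_{\gamma^\ast} c^\ast_{\gamma^\ast}\Ff(y-\gamma^\ast) \equiv 0$, which is $(iv)$.

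For $(iv) \Leftrightarrow (v)$: the direction $(v) \Rightarrow (iv)$ is immediate since $\hat\mu\hat\Ff = 0$ (because $\hat\Ff$ vanishes on $\supp\hat\mu \subset \Zer(\hat\Ff)$) forces $\mu \ast \Ff \equiv 0$. Conversely, given $(iv)$, the measure $\mu := \sum c^\ast_{\gamma^\ast}\delta_{\gamma^\ast}$ is a translation-bounded tempered measure and $\widehat{\mu \ast \Ff} = \hat\mu\hat\Ff = 0$ yields $\supp\hat\mu \subset \Zer(\hat\Ff)$, a locally finite set by \eqref{locfin}. A Meyer-type structural theorem (asserting that the Fourier transform of a translation-bounded measure, when supported on a locally finite set, is necessarily itself a locally finite atomic measure, with no distributional derivatives of Dirac masses) then produces $\hat\mu = \sum a_s\delta_s$, so $\mu$ is a sparse crystalline measure with spectrum in $\Zer(\hat\Ff)$.

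The main obstacle is $(iv) \Rightarrow (iii)$, since naive sharp truncation of the identity $\sum c^\ast_{\gamma^\ast}\Ff(x-\gamma^\ast) \equiv 0$ leaves boundary errors of fixed size $\sim \|\cc^\ast\|_\infty\|\Ff\|_W$, bounded but not vanishing with the truncation scale. My approach is a smooth cutoff: fix $\phi \in C_c^\infty(\R)$ with $\phi \equiv 1$ near $0$, set $\phi_N(x) := \phi(x/N)$, and define the finite-support sequence $c^{\ast,N}_{\gamma^\ast} := c^\ast_{\gamma^\ast}\phi_N(\gamma^\ast)$ on $\Gamma^\ast$. Using $(iv)$ to subtract $\phi_N(x)\sum c^\ast_{\gamma^\ast}\Ff(x-\gamma^\ast) = 0$ yields the identity
\begin{equation*}
\sum_{\gamma^\ast} c^{\ast,N}_{\gamma^\ast}\Ff(x-\gamma^\ast) = \sum_{\gamma^\ast} c^\ast_{\gamma^\ast}(\phi_N(\gamma^\ast) - \phi_N(x))\Ff(x-\gamma^\ast).
\end{equation*}
Bounding $|\phi_N(\gamma^\ast) - \phi_N(x)| \leq \min(1, \|\phi'\|_\infty|x-\gamma^\ast|/N)$, splitting the sum at $|x-\gamma^\ast| = N$, and using the separation of $\Gamma^\ast$, the ``near'' part is controlled by $C_\Gamma N^{-1}\sum_{|k|\leq N}(|k|+1)a_k$ where $a_k := \|\Ff\|_{L^\infty(k,k+1)}$; this tends to $0$ by Kronecker's lemma since $\sum a_k = \|\Ff\|_W < \infty$. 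The ``far'' part is bounded by the Wiener tail $\sum_{|k|>N}a_k \to 0$. Hence $\left\|\sum c^{\ast,N}_{\gamma^\ast}\Ff(\cdot-\gamma^\ast)\right\|_\infty \to 0$ while $\|\cc^{\ast,N}\|_\infty \to \|\cc^\ast\|_\infty > 0$. I would then transfer this finite sequence from $\Gamma^\ast$ to $\Gamma$ via the weak convergence $\Gamma - x_k \to \Gamma^\ast$: for each $N$ choose $k_N$ large enough that $\Gamma^\ast \cap \supp\phi_N$ injects into $\Gamma - x_{k_N}$ with displacement $\epsilon_N$ satisfying $\omega(\epsilon_N)\cdot|\Gamma^\ast\cap\supp\phi_N| \to 0$, where $\omega$ is the modulus of continuity of $\Ff$ (uniformly continuous on $\R$ since $\Ff \in C(\R)$ and $\Ff(x) \to 0$ at $\pm\infty$, a consequence of $\Ff \in W$). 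The resulting finite sequence on $\Gamma$ realizes the vanishing stability ratio required for $(iii)$.
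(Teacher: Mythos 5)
Your logical architecture (the cycle $(iii)\Rightarrow(ii)\Rightarrow(i)\Rightarrow(iv)\Rightarrow(iii)$ plus $(iv)\Leftrightarrow(v)$) is sound, and your treatment of $(i)\Rightarrow(iv)$ matches the paper's compactness argument. Your direct proof of $(iv)\Rightarrow(iii)$ by the smooth cutoff $\phi_N$, the telescoping identity $\sum c^\ast_{\gamma^\ast}(\phi_N(\gamma^\ast)-\phi_N(x))\Ff(x-\gamma^\ast)$, and Kronecker's lemma is correct and genuinely different from the paper: the paper instead routes $(iv)\Rightarrow(iii)$ through a duality argument (showing $c_0$-stability implies $l^\infty$-stability via surjectivity of the pre-adjoint $\check\Ff\ast\mu|_\Gamma$ on $M(\R)$ and a perturbation of surjections), which your cutoff bypasses entirely. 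That is a real simplification of the $(i)$--$(iv)$ block.

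The gap is in $(iv)\Leftrightarrow(v)$, which is where the substance of the theorem lies. For $(iv)\Rightarrow(v)$ you write that $\widehat{\mu\ast\Ff}=\hat\mu\hat\Ff=0$ ``yields $\supp\hat\mu\subset\Zer(\hat\Ff)$.'' This is division of a distribution by $\hat\Ff$, and $\hat\Ff$ is only \emph{continuous} (the hypothesis is $\Ff\in W\cap C(\R)$, not smoothness of $\hat\Ff$), so neither the product $\hat\mu\hat\Ff$ nor the localization of $\supp\hat\mu$ is defined by general distribution theory. The paper has to work on the physical side: on an interval $I$ disjoint from $\Zer(\hat\Ff)$ it invokes the local Wiener--L\'evy theorem to produce $h\in L^1(\R)$ with $\hat h=1/\hat\Ff$ on $I$, writes $\check\phi=\Ff\ast H$ with $H=h\ast\check\phi$, and concludes $\mu\ast\check\phi=\mu\ast\Ff\ast H\equiv0$. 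Second, the ``Meyer-type structural theorem'' you cite (no derivatives of Dirac masses appear) is not something you can treat as a black box here; the paper proves it by hand, and the proof is not formal: it uses that $\mu$ has \emph{bounded} coefficients on a \emph{separated} set to show, via the scaling $\psi(t)=\psi_2(t/\epsilon)$ and the bound $\epsilon\sum_{\gamma^\ast}|\hat\psi_2(\epsilon\gamma^\ast)|\le C$, that the coefficient $b_0$ of $\delta_{t_0}'$ must vanish, and a second-order integration by parts (Claim~\ref{c2}) to dispose of higher-order contributions. Without these two ingredients your $(iv)\Rightarrow(v)$ is an assertion, not a proof. Finally, $(v)\Rightarrow(iv)$ is not ``immediate'' either: the generalized Poisson formula \eqref{crys} is valid for Schwartz test functions, and $\Ff\in W\cap C(\R)$ is not Schwartz, so the identity $\sum c_{\gamma^\ast}\Ff(x-\gamma^\ast)=0$ has to be reached by applying \eqref{crys} to smoothed, band-limited regularizations $\Ff_{\epsilon,\delta}$ and passing to the limit (the paper's Step~6, which occupies a full page and uses $\hat\Ff\in C(\R)$ vanishing on the spectrum of $\mu$ in an essential way). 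You should supply these three arguments, or at least precise references that cover them in the stated generality.
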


\begin{remark}
    In the assumptions of Theorem~\ref{schwartz_approx_theorem}, one can prove that the condition
\begin{enumerate}
    \item[$(vi)$] $\Gamma$-shifts of $\Ff$ are not $l^{p}$-stable for $1\le p < \infty$,
\end{enumerate}
    is also equivalent to $(i)-(v)$. The  implication $(vi) \rightarrow (i)$ follows from  Theorem~\ref{t0}.
    We omit the proof of the reverse implication.   
   Therefore,  if  $\Gamma$-shifts are stable for some  $1\leq p\leq\infty,$ then they are stable for every $p$.
\end{remark}

\begin{proof} We divide the proof into steps.

\noindent {\bf Step 1.} The implications  $(iii)\rightarrow (ii)\rightarrow (i)$ are obvious.

\noindent {\bf Step 2.} $(i)\rightarrow (ii)$. We assume that $\Gamma$-shifts are $c_0$-stable and prove that they are $l^\infty$-stable.

We will need a well-known stability property for surjective operators: 
     Assume $X, Y$ are Banach spaces, and the bounded linear operator $A: X \to Y$ is a surjection. Then there exists $\epsilon=\epsilon(A) > 0$ such that for
every linear operator $B: X \to Y$ with $\|B\| < \epsilon,$  the operator $A + B: X \to Y$ is a surjection.

 Let $T$ be an operator defined on the space $c_0(\Gamma)$  by $$ T{\bf c}=\sum_{\gamma \in \Gamma}c_\gamma\Ff(x-\gamma),\quad {\bf c}=\{c_\gamma\}\in c_0(\Gamma).$$It is clear that $T$ is a bounded operator from $c_0(\Gamma)$ to  $C_0(\R)$,  and its dual is the operator$$T^\ast: M(\R)\to l^1(\Gamma),\quad T^{\ast} \mu=\left\{\int_\R \Ff(x-\gamma)\,d\mu(x),\,\gamma\in\Gamma\right\}=\check\Ff\ast\mu|_\Gamma,$$where $\check\Ff(x)=\Ff(-x).$
 Then $c_0$-stability is equivalent to the property that there exists $K>0$ such that  $\|T{\bf c}\|_\infty\geq K\|{\bf c}\|_\infty,$ for every ${\bf c}= \{c_{\gamma}\}\in c_0(\Gamma),$ which in turn is equivalent to the property 
     that the operator $T^\ast$ is onto. 

 Let $0<\delta<1$ and let  $h$ be a non-negative smooth function supported by $[-\delta,\delta]$  satisfying $\int\limits_{\R} h(x)\,dx=1$. Set
     $$A: M(\R)\to l^1(\Gamma),\quad A \mu=\int\limits_\R\int\limits_{-\delta}^\delta \Ff(x-\gamma-s)h(s)\,ds\,d\mu(x)|_{\Gamma}=\check\Ff\ast h\ast\mu|_\Gamma.$$
    
  Fix  any small number $\epsilon>0$ and consider the sum
     $$\sum_{\gamma\in\Gamma} |\Ff(x-\gamma)-(\Ff\ast h)(x-\gamma)|=\sum_{\gamma\in\Gamma}\left|\int_{-\delta}^\delta (\Ff(x-\gamma)-\Ff(x-s-\gamma))h(s)\,ds\right|.$$ 
     Since $\Ff\in W$, there is a large number $R=R(\epsilon)$ such that   
     for every $x\in\R,$
     $$\sum_{\gamma\in\Gamma, |x-\gamma|>R}(|\Ff(x-\gamma)|+|\Ff(x-s-\gamma)|)\leq \epsilon,\quad |s|\leq\delta<1.$$
Further, since $\Gamma$ is separated, there is an integer $N=N(R)$ such that
$$\#\{\gamma\in\Gamma:|\gamma-x|\leq R\}\leq N,\quad \mbox{for every } x\in\R.$$
Now, condition $\Ff\in C(\R)$ implies that there exists  $\delta_0$ such that $$|\Ff(t)-\Ff(t-s)|\leq \epsilon/N,\quad |t|\leq R, \ |s|<\delta<\delta_0.$$
This gives
\begin{equation}\label{dif_G_gamma}
\sum_{\gamma\in\Gamma} |\Ff(x-\gamma)-\Ff(x-\gamma-s)|=\sum_{|\gamma-x|>R}+\sum_{|\gamma-x|\leq R}\leq 2\epsilon,\quad |s|\leq \delta_0.    
\end{equation}
We see that 
$$\|(A-T^\ast)\mu\|_{l^1(\Gamma)}=\sum_{\gamma\in\Gamma}\left|\int\limits_\R\int\limits_{-\delta}^\delta\left(\Ff(x-\gamma)-\Ff(x-\gamma-s)\right)h(s)ds\,d\mu(x)\right|\leq $$$$\|\mu\|\int\limits_{-\delta}^\delta\sum_{\gamma\in\Gamma} \left|\Ff(x-\gamma)-\Ff(x-\gamma+\tau)\right| h(s)ds\leq 2\epsilon\|\mu\|. $$
   We conclude that the operator $A$ is onto. 
     
     Observe that $h\ast\mu\in L^1(\R)$, for every $\mu\in M(\R)$. This means that the operator
     $$B:L^1(\R)\to l^1(\Gamma),\quad B \varphi=\check\Ff\ast\varphi|_\Gamma,\quad \varphi\in L^1(\R),$$ is also onto. Therefore, its dual operator 
     $$B^\ast: l^\infty(\Gamma)\to L^\infty(\R),\quad B^\ast {\bf c}=\sum_\Gamma c_\gamma\Ff(x-\gamma),$$ satisfies $\|B^\ast {\bf c}\|_\infty\geq C\|{\bf c}\|_\infty,$ which means that $\Gamma$-translates are $l^{\infty}$-stable.

{\noindent \bf Step 3.} $(ii)\rightarrow (iii)$. Assume that $\Gamma$-shifts of $\Ff$ are not $c_0$-stable, i.e. for every $\epsilon>0$ there is a sequence ${\bf c}\in c_0(\Gamma)$ such that $\|{\bf c}\|_\infty=1$ and $$\left\|\sum_{\gamma\in\Gamma}c_\gamma\Ff(x-\gamma)\right\|_\infty\leq\epsilon.$$ Choose $N$ so large that $|c_\gamma|<\epsilon$ for every $\gamma\in\Gamma, |\gamma|\geq N.$ Since $\Ff\in W$, we get
$$\left\|\sum_{\gamma\in\Gamma, |\gamma|\geq N}c_\gamma\Ff(x-\gamma)\right\|_\infty< \epsilon \|\Ff\|_W.$$ We conclude that
$$\left\|\sum_{\gamma\in\Gamma, |\gamma|<N}c_\gamma\Ff(x-\gamma)\right\|_\infty\leq \epsilon(1+\|\Ff\|_W),$$which proves property (iii).

{\noindent \bf Step 4.} 
$(i)\rightarrow (iv)$.  Write $\Gamma=\{\gamma_k:k\in\Z\}$.
    By (i), for every $n\in\N$ there is a  sequence  ${\bf c}^{(n)}=
    \{c^{(n)}_{k}\}_{k\in\Z}\in l^\infty(\Z)$ such that $\|{\bf c}^{(n)}\|_\infty=1$ and 
    \begin{equation}\label{ee}
    \left\|\sum\limits_{k\in\Z} c^{(n)}_{k} \Ff(x - \gamma_k)\right\|_{\infty} < 1 /n.
    \end{equation}
    
    For every $n$, choose an integer $j(n)\in\Z$ such that $|c^{(n)}_{j(n)}|>1-1/n$.
    Clearly,  $0\in \Gamma -\gamma_{j(n)}.$     
    By Lemma \ref{lsep}, passing to a subsequence, we may assume that
$\Gamma - \gamma_{j(n)}$ converges weakly to a non-empty set $ \Gamma'=\{\gamma'_k:k\in\Z\} \in W(\Gamma)$. Passing again to a subsequence, we find  a sequence $\{c_k\}\in l^\infty(\Z)$ such that   $$c_{k+j(n)}^{(n)}\to c_k,\quad n\to\infty, \ k\in\Z.
$$Clearly,  $|c_0|=1,$ so that the limit sequence is non-trivial.
Since $\Ff\in W\cap C(\R)$, it follows from \eqref{ee} that 
$$ \sum\limits_{k\in\Z} c_{k} \Ff(x - \gamma_k')\equiv 0.$$ 

{\bf \noindent Step 5.} $(iv)\rightarrow (iii)$.
Write $$\Gamma=\{...<\gamma_{n-1}<\gamma_n<\gamma_{n+1}<...\} \quad  \text{and} \quad  
\Gamma^\ast=\{...<\gamma^\ast_{n-1}<\gamma^\ast_n<\gamma^\ast_{n+1}<...\}.$$ 

Clearly,  $(iv)$ implies that $\Gamma^\ast$-translates of $\Ff$ are not $l^\infty$-stable. By Steps 2 and 3, 
they are not $c_{00}$-stable. Therefore, for every $\epsilon>0$ there exists a finite sequence ${\bf c}=\{c_n\}$ such that
$$\|\cc\|_\infty=1,\quad \left\|\sum_{n}c_{n}\Ff(x-\gamma^\ast_n)\right\|_\infty\leq\epsilon.$$

Since $\Gamma^\ast\in W(\Gamma)$, for every $N\in\N$ and $\delta>0$ there exists $j\in \Z$ such that
$$|\gamma_{n+j}-\gamma^\ast_n|\leq\delta,\quad |n|\leq N.$$ Assumption $\Ff\in W(\R)\cap C(\R)$ obviously
implies that
$$\left\|\sum_{n}c_n\Ff(x-\gamma_{n+j})\right\|_\infty<2\epsilon,$$provided  $\delta$ sufficiently small. This proves  Step 5.

{\bf \noindent Step 6.} $(v)\rightarrow (iv)$. This follows from an approximation argument.
Choose any Schwartz function $H$ such that \begin{equation}\label{eh}\hat H(0)=1, \ H(t), \hat H(t)\geq 0,\ t\in\R, \ \hat H(t)=0, \ |t|\geq 1, \end{equation}and put $H_a(t):=H(at), a>0.$ Then $\widehat{H_a}(t)=\hat H(t/a)/a.$

Let $\epsilon, \delta$ be positive numbers, and set $\Ff_{\epsilon,\delta}:=(1/\epsilon)(\Ff\ast H_{1/\epsilon})H_\delta $. Then $\Ff_{\epsilon,\delta}\in S(\R)$ and $\widehat{\Ff_{\epsilon,\delta}}=(1/\delta)(\hat G\hat H_\epsilon)\ast \hat H_{1/\delta}.$ Since $\hat H_\epsilon(t)=0, |t|\geq 1/\epsilon$ and 
$\hat H_{1/\delta}(t)=0, |t|\geq\delta$, it follows that $\widehat{\Ff_{\epsilon,\delta}}(t)=0$
for $|t|\geq \delta+1/\epsilon.$
We now apply the generalized Poisson formula \eqref{crys} to the function $\Ff_{\epsilon,\delta}(x-\cdot\,)$:
$$\sum_{\gamma^\ast\in \Gamma^\ast}c_{\gamma^\ast}\Ff_{\epsilon,\delta}(x-\gamma^\ast)=
\sum_{s\in \Zer(\hat\Ff), |s|\leq 1/\epsilon+\delta}a_s e^{2\pi i sx}\left((\hat \Ff\hat H_{\epsilon})\ast (1/\delta)\hat H_{1/\delta}\right)(s).$$
Using \eqref{eh}, we get 
$$\left|\sum_{\gamma^\ast\in \Gamma^\ast}c_{\gamma^\ast}\Ff_{\epsilon,\delta}(x-\gamma^\ast)\right|\leq H(0) \max_{|s|<1/\epsilon+\delta}|a_s|\max\limits_{\substack{s\in \Zer(\hat\Ff) \\ |s|\leq 1/\epsilon+\delta}}\max_{|t-s|\leq\delta}
|(\hat \Ff\hat H_{\epsilon})(t)|,\quad x\in\R.$$

Since $\Ff\in W,$ it is easy to check that $\Ff\ast H_{1/\epsilon}\in W.$
Since $\hat\Ff\in C(\R)$ and  $\hat\Ff$ vanishes on $\Zer(\hat\Ff)$, we see that the right-hand side of the above inequality tends to zero as $\delta\to 0,$ 
so that
\begin{equation}\label{ein}
\left\| \sum_{\gamma^\ast\in \Gamma^\ast}c_{\gamma^\ast}\Ff_{\epsilon,\delta}(x-\gamma^\ast) \right\|_\infty\to0,\quad \delta\to0.  
\end{equation}

Condition $\Ff\in W$ implies $\Ff\ast H_{1/\epsilon}\in W$. We now use  Lemma 8.1 in \cite{uz}, which gives
\begin{equation}\label{ein2}\left\| \sum_{\gamma^\ast\in \Gamma^\ast}c_{\gamma^\ast}\Ff\ast H_{1/\epsilon}(x-\gamma^\ast) \right\|_\infty\leq C(\Gamma^\ast)\|\Ff\ast H_{1/\epsilon}\|_W\|{\bf c^\ast}\|_\infty,\quad {\bf c^\ast}=\{c_{\gamma^\ast}\}\in l^\infty(\Gamma^\ast).\end{equation}

Choose any small number $\rho>0$.
Write 
$$\sum_{\gamma^\ast\in \Gamma^\ast}c_{\gamma^\ast}(\Ff\ast H_{1/\epsilon})(x-\gamma^\ast) (H(0)-H_\delta(x-\gamma^\ast))=\sum_{\gamma^\ast\in\Gamma^\ast, |\gamma^\ast-x|\leq R} + \sum_{\gamma^\ast\in\Gamma^\ast, |\gamma^\ast-x|> R}=:S_1+S_2.$$Since $\Ff\ast H_{1/\epsilon}\in W$, by \eqref{ein2}, we may choose $R=R(\epsilon)$ so large that $|S_2(x)|\leq\rho$, for every $x\in\R.$ Clearly, the functions $H_\delta(x)$ are uniformly bounded and converge uniformly on compacts in $\R$ to $H(0)\ne0$ as $\delta\to 0.$ This and \eqref{ein2} shows that  $|S_1|<\rho$ for every $x\in\R$, provided $\delta$ is small enough.
By \eqref{ein} we conclude that
\begin{equation}\label{eqve}\sum_{\gamma^\ast\in \Gamma^\ast}c_{\gamma^\ast}(\Ff\ast H_{1/\epsilon})(x-\gamma^\ast)\equiv 0.
\end{equation}

    Fix any small $\sigma > 0.$
    Similarly to the argument on Step~2 (see~\eqref{dif_G_gamma}), one can find $\delta_0$ such that 
  $$  \sum_{\gamma^\ast\in \Gamma^\ast}|\Ff(x-\gamma^\ast) - \Ff(x-s-\gamma^{\ast})| \le \sigma, \quad  \text{   for every } x\in \R, |s| < \delta_0. 
  $$
    For sufficiently small $\epsilon$, the inequality
    $$\int\limits_{\R\setminus(-\delta_0,\delta_0)} \frac{1}{\epsilon}  H_{1/\epsilon}(s) ds < \frac{\sigma}{\|\Ff\|_\infty}.
    $$
    holds. Since $\int_\R (1/\epsilon)H_\epsilon=1$, these two inequalities imply
        $$\left|\sum_{\gamma^\ast\in \Gamma^\ast}c_{\gamma^\ast}(\Ff(x-\gamma^\ast) - (\Ff\ast (1/\epsilon) H_{1/\epsilon})(x-\gamma^\ast)\right|=$$    
     $$\left|\sum_{\gamma^\ast\in \Gamma^\ast}c_{\gamma^\ast}\int_\R(\Ff(x-\gamma^\ast) - \Ff(x-\gamma^\ast-s)) (1/\epsilon)H_\epsilon(s)\,ds\right| \le2\sigma\|{\bf c^\ast}\|_\infty.$$This and \eqref{eqve} prove $(iv)$.

{ \bf   \noindent Step 7.} $(iv)\rightarrow (v)$. Condition $(iv)$ means that the measure $$\mu=\sum_{\gamma^\ast\in\Gamma^\ast}c_{\gamma^\ast}\delta_{\gamma^\ast}$$satisfies
    $$\mu\ast \Ff(x)\equiv 0.$$
    Since the coefficients $c_{\gamma^{\ast}}$ are bounded and 
   $\Gamma^{\ast}$ is separated, this measure is a tempered distribution. 

    \begin{claim}\label{c1}
        We have $\langle\mu,\hat\phi\rangle=0$, for every $\phi\in S(\R)$ with compact support satisfying $\supp(\phi)\cap \Zer(\hat\Ff)=\emptyset.$
    \end{claim}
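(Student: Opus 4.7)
The plan is to prove the claim via Fourier duality: obtain $\sum_{\gamma^\ast} c_{\gamma^\ast} \hat\phi(\gamma^\ast) = 0$ from the function identity $\sum_{\gamma^\ast} c_{\gamma^\ast} \Ff(x - \gamma^\ast) \equiv 0$ by pairing against a Schwartz test function. For any $\rho \in S(\R)$, applying Fubini to $0 = \int \hat\rho(u)(\mu \ast \Ff)(u)\,du$---valid since $\Ff \in W$ and $\Gamma^\ast$ separated give the uniform bound $\sum_{\gamma^\ast} |c_{\gamma^\ast}||\Ff(u - \gamma^\ast)| \leq C \|\cc\|_\infty$, while $\hat\rho \in L^1$---yields, after the standard Parseval computation $\int \hat\rho(u) \Ff(u - \gamma^\ast)\,du = \widehat{\rho \hat\Ff}(\gamma^\ast)$,
\[
\sum_{\gamma^\ast} c_{\gamma^\ast} \widehat{\rho \hat\Ff}(\gamma^\ast) = 0 \quad \text{for every } \rho \in S(\R).
\]
Thus it would suffice to choose $\rho \in S(\R)$ with $\rho \hat\Ff \equiv \phi$, for then $\widehat{\rho \hat\Ff} = \hat\phi$ and the claim follows immediately.

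By compactness of $\supp \phi$ and its disjointness from the closed set $\Zer(\hat\Ff)$, continuity of $\hat\Ff$ provides $\varepsilon > 0$ with $|\hat\Ff| \geq \varepsilon$ on an open neighborhood of $\supp \phi$. Hence the natural candidate $\rho_0 := \phi/\hat\Ff$ on $\supp \phi$ (extended by $0$) is continuous with compact support and satisfies $\rho_0 \hat\Ff \equiv \phi$. The main obstacle is that $\hat\Ff$ is only assumed continuous, so $\rho_0$ is continuous but generically not smooth, hence not Schwartz, and the Fubini identity above does not apply to $\rho_0$ directly. I would therefore approximate: take $\rho_n \in C_c^\infty(\R) \subset S(\R)$ obtained by mollifying $\rho_0$, apply the identity for each $\rho_n$ to get $\sum c_{\gamma^\ast} \widehat{\rho_n \hat\Ff}(\gamma^\ast) = 0$, and pass to the limit $n \to \infty$.

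The crux---and main obstacle---is passing this limit through the infinite sum against $\cc \in l^\infty(\Gamma^\ast)$. Pointwise, $\widehat{\rho_n \hat\Ff}(\gamma^\ast) \to \hat\phi(\gamma^\ast)$ because $(\rho_n - \rho_0)\hat\Ff$ is continuous with sup-norm tending to $0$ on a fixed compact set, but since $\cc$ is merely bounded this does not by itself justify convergence of the sum. The plan is to establish a uniform Wiener-norm estimate $\|\widehat{\rho_n \hat\Ff} - \hat\phi\|_W \to 0$, using $\|\widehat{\rho_n \hat\Ff}\|_W \leq C \|\hat\rho_n\|_1 \|\Ff\|_W$ (with the mollifier designed so that $\|\hat\rho_n\|_1$ remains uniformly bounded) together with the bandlimited structure of the error. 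Combined with the standard estimate $\sum_{\gamma^\ast} |f(\gamma^\ast)| \leq C(\Gamma^\ast) \|f\|_W$ for separated $\Gamma^\ast$, this gives
\[
\Big|\sum_{\gamma^\ast} c_{\gamma^\ast}\big(\widehat{\rho_n \hat\Ff}(\gamma^\ast) - \hat\phi(\gamma^\ast)\big)\Big| \leq \|\cc\|_\infty\, C(\Gamma^\ast)\, \|\widehat{\rho_n \hat\Ff - \phi}\|_W \longrightarrow 0,
\]
yielding $\sum c_{\gamma^\ast} \hat\phi(\gamma^\ast) = 0$ and completing the proof.
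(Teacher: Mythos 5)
Your opening reduction is fine: the Fubini/Parseval step giving $\sum_{\gamma^\ast}c_{\gamma^\ast}\widehat{\rho\hat\Ff}(\gamma^\ast)=0$ for $\rho\in S(\R)$ is correctly justified by $\Ff\in W$ and the separation of $\Gamma^\ast$, and the overall strategy --- divide $\phi$ by $\hat\Ff$ on a neighborhood of $\supp(\phi)$ --- is the same as the paper's. The gap is in the limit passage. You need $\sum_{\gamma^\ast}|\widehat{(\rho_n-\rho_0)\hat\Ff}(\gamma^\ast)|\to 0$ against merely bounded coefficients, and neither of the two estimates you actually have gives this: the error $g_n:=(\rho_n-\rho_0)\hat\Ff$ is continuous, supported in a fixed compact set, with $\|g_n\|_\infty\to0$, so $\hat g_n\to0$ uniformly and in $L^2$ (hence in $\ell^2(\Gamma^\ast)$ by Plancherel--P\'olya), but the $\ell^1(\Gamma^\ast)$-sum of samples of a band-limited function is controlled by neither its sup norm nor its $L^2$ norm. (Take $\Gamma^\ast=\Z$ and $g_n$ supported in $[-1/2,1/2]$: then $\sum_k|\hat g_n(k)|$ is the Wiener-algebra norm of $g_n$ on the circle, which is not dominated by $\|g_n\|_\infty$ and may even be infinite for continuous $g_n$.) Likewise, your uniform bound on $\|\hat\rho_n\|_1$ is asserted, not proved: for a standard mollification $\rho_n=\rho_0\ast\eta_n$ one has $\hat\rho_n=\hat\rho_0\,\hat\eta_n$, and keeping $\|\hat\rho_n\|_1$ bounded is essentially equivalent to $\hat\rho_0\in L^1$, i.e.\ to $\rho_0$ lying in the Fourier algebra $A(\R)=\{\hat h:h\in L^1(\R)\}$ --- which does \emph{not} follow from $\rho_0$ being continuous with compact support. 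So the crux you flag is a real obstruction, and the proposal does not overcome it.

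The missing ingredient is exactly what the paper imports: the local Wiener--L\'evy theorem. Since $\Ff\in W\subset L^1(\R)$, the function $\hat\Ff$ belongs to $A(\R)$ and does not vanish on a closed interval $I\supset\supp(\phi)$, so there is $h\in L^1(\R)$ with $\hat h=1/\hat\Ff$ on $I$. Then $H:=h\ast\check\phi\in L^1(\R)$ satisfies $\hat\Ff\hat H=\phi$, hence $\check\phi=\Ff\ast H$, and
\begin{equation*}
\mu\ast\check\phi=(\mu\ast\Ff)\ast H\equiv0,
\end{equation*}
which gives $\langle\mu,\hat\phi\rangle=(\mu\ast\check\phi)(0)=0$ with all convolutions absolutely convergent and no limiting argument at all. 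In short: your $\rho_0=\phi/\hat\Ff$ does lie in $A(\R)$, but proving that is the whole point, and it requires Wiener--L\'evy (or an equivalent $1/f$ argument); once you have it, the mollification scaffolding becomes unnecessary.
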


\begin{proof}
    Since $\phi$ has compact support, it suffices to prove this claim for the case when
    $\phi$ vanishes outside a closed interval $I, I\cap \Zer(\hat\Ff)=\emptyset.$

    Let $\check\phi(x):=\hat\phi(-x)$ be  the inverse Fourier transform of $\phi$. 
    Then since$$\langle \mu,\hat\phi \rangle =\sum_{\gamma^\ast\in\Gamma^\ast}c_{\gamma^\ast}\hat\phi(\gamma^\ast)=\sum_{\gamma^\ast\in\Gamma^\ast}c_{\gamma^\ast} \check\phi(-\gamma^\ast)=(\mu\ast \check\phi)(0),$$it suffices to prove that $\mu\ast\check\phi\equiv0.$

    Since $\hat\Ff$ is continuous and does not vanish on $I,$ by the local form of the Wiener–L\'{e}vy Theorem  (see   \cite[Chapter~6]{MR1802924}) there is a function $h\in L^1(\R)$ such that  $\hat h(t)=1/\hat\Ff(t)$ on $I$.  
    Put $H:=h\ast\check\phi,$ then 
    $$
    \hat H=\hat h \phi=\left\{\begin{array}{cc}
     \phi/\hat\Ff    & \mbox{on } I \\
      0  &  \mbox{outside }  I. 
    \end{array}\right.
    $$
    Hence, $\hat\Ff\hat H=\phi,$ and so $\check\phi= \Ff\ast H.$ Since $\mu\ast\Ff\equiv 0,$ we conclude that $$\mu\ast \check\phi=\mu\ast \Ff\ast H\equiv0.$$
\end{proof}

\begin{claim}\label{c2}
    Let $I_0$ be an open interval and $t_0\in I_0$ such that $I_0\cap \Zer(\hat\Ff)=\{t_0\}.$
    Assume $\phi\in S(\R)$ vanishes outside $I_0$ and satisfies $\phi(t_0)=\phi'(t_0)=0.$
    Then $\langle\mu,\hat\phi\rangle=0.$
\end{claim}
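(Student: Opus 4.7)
The plan is to reduce Claim~\ref{c2} to Claim~\ref{c1} via a cutoff argument in which the order-$2$ vanishing of $\phi$ at $t_0$ is exactly what makes the error term tend to zero. I would fix a bump function $\chi\in C_c^\infty(\R)$ with $\chi\equiv 1$ on $[-1,1]$ and $\supp\chi\subset[-2,2]$, and set $\chi_n(t):=\chi(n(t-t_0))$. For $n$ large enough that $(t_0-2/n,t_0+2/n)\subset I_0$, write $\phi=\phi_n+\chi_n\phi$ with $\phi_n:=(1-\chi_n)\phi$. The piece $\phi_n$ is Schwartz, supported in the compact set $\overline{I_0}\setminus(t_0-1/n,t_0+1/n)$, and by the hypothesis $I_0\cap\Zer(\hat\Ff)=\{t_0\}$ this support is disjoint from $\Zer(\hat\Ff)$; Claim~\ref{c1} then yields $\langle\mu,\hat\phi_n\rangle=0$ for all such $n$, so it suffices to prove $\langle\mu,\widehat{\chi_n\phi}\rangle\to 0$ as $n\to\infty$.

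The hypothesis $\phi(t_0)=\phi'(t_0)=0$ enters through Taylor's theorem applied to the Schwartz function $\phi$: one has $|\phi(t)|\le C|t-t_0|^2$ and $|\phi'(t)|\le C|t-t_0|$ near $t_0$, while $\phi^{(k)}$ stays bounded for $k\ge 2$. Combined with $|\chi_n^{(j)}|\le C_j n^j$ and the $O(1/n)$ width of $\supp(\chi_n\phi)$, a Leibniz expansion of $(\chi_n\phi)^{(k)}$ produces the uniform family of bounds
$$\|(\chi_n\phi)^{(k)}\|_{L^1(\R)}\le C_k\,n^{k-3},\qquad k=0,1,2,\ldots,$$
and hence, after $k$-fold integration by parts, $|\widehat{\chi_n\phi}(\gamma^\ast)|\le C_k\,n^{k-3}/(2\pi|\gamma^\ast|)^k$ for every $k$ and every $\gamma^\ast\neq 0$.

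It remains to sum these estimates over the separated set $\Gamma^\ast$. I would split at $|\gamma^\ast|=n$: on the inner part use the $k=0$ bound $|\widehat{\chi_n\phi}(\gamma^\ast)|\le C/n^{3}$, where separation of $\Gamma^\ast$ leaves $O(n)$ terms and contributes $O(n^{-2})$; on the outer part use the $k=3$ bound $|\widehat{\chi_n\phi}(\gamma^\ast)|\le C/|\gamma^\ast|^3$, whose sum over $\{|\gamma^\ast|>n\}$ is again $O(n^{-2})$ by separation. Since the coefficients $c_{\gamma^\ast}$ are uniformly bounded, this gives $|\langle\mu,\widehat{\chi_n\phi}\rangle|=O(n^{-2})\to 0$ and, combined with the previous paragraph, $\langle\mu,\hat\phi\rangle=0$. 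The whole proof is essentially this bookkeeping; the main obstacle is extracting from the two vanishing derivatives at $t_0$ exactly the extra factor $n^{-2}$ in the $L^1$-norm of $\chi_n\phi$ that, after integration by parts on the outer range and trivial counting on the inner range, is just sharp enough for the Fourier coefficients to be summable over a separated set.
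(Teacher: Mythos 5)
Your proposal is correct and follows essentially the same route as the paper: decompose $\phi$ into a piece supported away from $t_0$ (killed by Claim~\ref{c1}) and a piece supported in a shrinking neighborhood of $t_0$, then use the double zero of $\phi$ at $t_0$ together with integration by parts to make the Fourier transform of the inner piece small and summable over the separated set $\Gamma^\ast$. The paper uses a fixed cutoff rescaled by $\epsilon$ and a single two-fold integration by parts yielding $|\hat\phi_1(x)|\le C\epsilon/(1+|x|^2)$, whereas you split the sum at $|\gamma^\ast|=n$ and use the $k=0$ and $k=3$ bounds; this is only a difference in bookkeeping.
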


\begin{proof}  
Without loss of generality, we may assume that $I_0=(-a,b)$, for some $0<a,b<1$
 and $t_0=0$. Fix a number $\epsilon, 0<\epsilon<\min\{a,b\}/4$. 
 
 Choose any functions $\psi_1,\psi_2\in S(\R)$ satisfying $$\psi_1(t)+\psi_2(t)=1, \quad |t|\leq 4, \quad \psi_1(t)=0,\ |t|\geq 2,\quad  \psi_2(t)=0,\ |t|\leq 1.$$
Set
$\phi_j(t):=\phi(t)\psi_j(t/\epsilon), j=1,2$. Then $$\phi_1+\phi_2=\phi,\quad \phi_1(t)=0, \ |t|\geq 2\epsilon,\ \  \phi_2(t)=0,\ |t|\leq \epsilon.$$

By Claim \ref{c1}, $\langle\mu,\hat\phi_2\rangle=0.$ Let us now prove that $\langle\mu,\hat\phi_1\rangle=0.$
Below we denote by $C$ and $C_j$ positive constants.

Since $\phi(0)=\phi'(0)=0,$ there is a constant $C=C(\phi)$ such that  $\|\phi_1''\|_\infty< C.$
   Using integration by parts, we get $$\hat\phi_1(x)=\int_{-2\epsilon}^{2\epsilon}e^{-2\pi i xt}\phi_1(t)\,dt=-\frac{1}{(2\pi x)^2}\int_{-2\epsilon}^{2\epsilon}e^{-2\pi i xt}\phi_1''(t)\,dt.$$  This  implies $|\hat\phi_1(x)|\leq C_1\epsilon/(1+|x|^2)$, and so $|\langle\mu,\hat\phi_1\rangle|<C_2\epsilon.$ Therefore,
$|\langle \mu,\hat\phi\rangle|=|\langle\mu,\hat\phi_1\rangle|<C_2\epsilon.$ Since $\epsilon$ can be chosen arbitrarily small, we conclude that $\langle\mu,\hat\phi\rangle=0$.
\end{proof}

\begin{claim}\label{c3}
    Let $I_0$ and  $t_0\in I_0$ satisfy the assumptions of Claim \ref{c2}. There is a constant $a_0$ such that the equation    \begin{equation}\label{eqe}\langle\mu,\hat\phi\rangle=a_0\phi(t_0)\end{equation} holds for every  $\phi\in S(\R)$ which vanishes outside $I_0$.
\end{claim}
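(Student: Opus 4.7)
My plan is to reduce the claim to showing that the linear functional $L(\phi):=\langle\mu,\hat\phi\rangle$ vanishes on every $\phi\in S(\R)$ with $\supp\phi\subset I_0$ and $\phi(t_0)=0$, and then to exhibit $a_0$ via a single fixed cutoff. Concretely, fix once and for all a $\chi\in S(\R)$ with $\supp\chi\subset I_0$ and $\chi(t_0)=1$, and set $a_0:=\langle\mu,\hat\chi\rangle$. For arbitrary $\phi\in S(\R)$ vanishing outside $I_0$, the function $\phi_0:=\phi-\phi(t_0)\chi$ still vanishes outside $I_0$ and satisfies $\phi_0(t_0)=0$; by linearity, $L(\phi)=\phi(t_0)\,a_0+L(\phi_0)$, so the claim reduces to proving $L(\phi_0)=0$.

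To establish this vanishing, I mimic the splitting in the proof of Claim~\ref{c2}. Choose $\epsilon>0$ small enough that $[t_0-2\epsilon,t_0+2\epsilon]\subset I_0$ and a smooth cutoff $\chi_\epsilon$ equal to $1$ on $[t_0-\epsilon,t_0+\epsilon]$, supported in $(t_0-2\epsilon,t_0+2\epsilon)$, with $\|\chi_\epsilon^{(k)}\|_\infty\le C_k\epsilon^{-k}$, and decompose $\phi_0=\eta_1+\eta_2$ via $\eta_1:=\phi_0\chi_\epsilon$ and $\eta_2:=\phi_0(1-\chi_\epsilon)$. Then $\eta_2\in S(\R)$ has compact support contained in $I_0\setminus\{t_0\}$, hence disjoint from $\Zer(\hat\Ff)$, and Claim~\ref{c1} yields $L(\eta_2)=0$. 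It remains to show $|L(\eta_1)|\to 0$ as $\epsilon\to 0$. Since $\phi_0(t_0)=0$, we have $|\phi_0(t)|\le\|\phi_0'\|_\infty|t-t_0|$, whence $\|\eta_1\|_\infty=O(\epsilon)$, $\|\eta_1'\|_\infty=O(1)$, $\|\eta_1''\|_\infty=O(1/\epsilon)$ on $\supp\eta_1\subset[t_0-2\epsilon,t_0+2\epsilon]$. Combining the trivial bound $|\hat{\eta_1}(x)|\le\|\eta_1\|_1=O(\epsilon^2)$ with the twofold integration-by-parts bound $|\hat{\eta_1}(x)|\le (2\pi x)^{-2}\|\eta_1''\|_1=O(x^{-2})$ gives
\[
|\hat{\eta_1}(x)|\le C\min(\epsilon^2,\,x^{-2}),\quad x\in\R.
\]
Because $\Gamma^\ast$ is separated and $\{c_{\gamma^\ast}\}$ is bounded, splitting the sum at the crossover $|\gamma^\ast-t_0|\sim 1/\epsilon$ yields $|L(\eta_1)|\le\|\cc\|_\infty\sum_{\gamma^\ast}|\hat{\eta_1}(\gamma^\ast)|=O(\epsilon)$: the interior range holds $O(1/\epsilon)$ points each contributing $O(\epsilon^2)$, while the exterior contribution is controlled by the tail of $\sum n^{-2}$ together with the separation of $\Gamma^\ast$. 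Letting $\epsilon\to 0$ forces $L(\phi_0)=0$, and the claim follows with $a_0=\langle\mu,\hat\chi\rangle$.

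The main obstacle is precisely this interpolation between low and high frequencies. The earlier proof of Claim~\ref{c2} exploited both $\phi(t_0)=0$ and $\phi'(t_0)=0$ to obtain $\|\phi_1''\|_\infty=O(1)$, so that a single uniform decay bound $|\hat{\phi_1}(x)|=O(\epsilon/(1+x^2))$ summed to $O(\epsilon)$ with no effort; here, with only $\phi_0(t_0)=0$, we lose a factor of $\epsilon$ in the second-derivative estimate, the bound $|\hat{\eta_1}(x)|=O(x^{-2})$ is not summable on its own, and the trivial $L^\infty$ bound $O(\epsilon^2)$ must be brought in on the low-frequency range $|x|\lesssim 1/\epsilon$ to recover a gain of $\epsilon$. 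Once this careful split is in place, the rest is routine.
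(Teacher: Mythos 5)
Your proof is correct, but it follows a genuinely different route from the paper's. The paper subtracts off \emph{both} the value and the derivative at $t_0$ using two fixed test functions $\psi_1,\psi_2$ (with $\psi_1(t_0)=1,\psi_1'(t_0)=0$ and $\psi_2(t_0)=0,\psi_2'(t_0)=1$), invokes Claim \ref{c2} to obtain the two-term representation $\langle\mu,\hat\psi\rangle=a_0\psi(t_0)+b_0\psi'(t_0)$, and then kills the derivative coefficient $b_0$ by a dilation trick: plugging in $\psi_2(t/\epsilon)$, whose derivative at $t_0$ is $1/\epsilon$, while the pairing $\epsilon\sum_{\gamma^\ast}|\hat\psi_2(\epsilon\gamma^\ast)|$ stays bounded uniformly in $\epsilon$ by separation of $\Gamma^\ast$. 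You instead subtract only the value, bypass Claim \ref{c2} entirely, and show directly that the functional annihilates every $\phi_0$ supported in $I_0$ with $\phi_0(t_0)=0$ (no condition on $\phi_0'(t_0)$), via the cutoff at scale $\epsilon$, Claim \ref{c1} for the outer piece, and the two-regime bound $|\hat\eta_1(x)|\lesssim\min(\epsilon^2,x^{-2})$ summed over the separated set with the crossover at $|x|\sim1/\epsilon$. Your estimates check out: $\|\eta_1\|_1=O(\epsilon^2)$ and $\|\eta_1''\|_1=O(1)$ are both uniform in $\epsilon$, and the split sum is indeed $O(\epsilon)$. What each approach buys: the paper's argument is shorter given Claim \ref{c2} and isolates the whole issue in the single scalar $b_0$; yours is self-contained modulo Claim \ref{c1} and in effect strengthens Claim \ref{c2} (only first-order vanishing of the value is needed), making transparent why a $\delta'$-type term cannot appear for a measure with separated support and bounded coefficients. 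At bottom both rest on the same quantitative fact --- the uniform boundedness of $\epsilon\sum_{\gamma^\ast}$ of a quantity decaying like $\min(1,(\epsilon|\gamma^\ast|)^{-2})$ --- so your interpolation between the trivial $L^1$ bound and the integration-by-parts bound is the paper's dilation estimate in disguise. One shared cosmetic point: like the paper, you apply Claim \ref{c1} to a function whose support may touch $\partial I_0$, where $\Zer(\hat\Ff)$ is not excluded by hypothesis; this is harmless (shrink $I_0$ slightly) but worth a sentence.
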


\begin{proof}
We may assume that  $I_0=(-a,b)$ and $t_0=0$.
    Fix any functions  $\psi_j\in S(\R), j=1,2,$  which vanish outside $I_0$ and satisfy $\psi_1(0)=1,\psi'_1(0)=0, \psi_2(0)=0, \psi'_2(0)=1.$ Set
$$a_0:=\langle\mu,\hat\psi_1\rangle,\quad b_0:=\langle\mu,\hat\psi_2\rangle.$$For every function $\psi\in S(\R)$ which vanishes outside $I_0,$
the function $\phi(t):=\psi(t)-\psi(0)\psi_1(t)-\psi'(0)\psi_2(t)$ satisfies the assumptions of Claim \ref{c2}.
It follows that $\langle\mu,\hat\phi\rangle=0$ and so \begin{equation}\label{ab0}\langle \mu,\hat\psi \rangle =\psi(0)\langle\mu,\hat\psi_1\rangle+\psi'(0)\langle\mu,\hat\psi_2\rangle=a_0\psi(0)+b_0\psi'(0).\end{equation}It remains to prove that $b_0=0.$

Now, fix a small positive number $\epsilon$ and set  $\psi(t):=\psi_2(t/\epsilon)$. Then $\psi(0)=0,\psi'(0)=1/\epsilon$ and $\hat\psi(x)=\epsilon\hat\psi_2(\epsilon x).$ By \eqref{ab0},
$$|b_0|/\epsilon=|\langle\mu,\hat\psi\rangle|=\left|\epsilon\sum_{\gamma^\ast\in\Gamma^\ast}c_{\gamma^\ast}\hat\psi_2(\epsilon  \gamma^\ast)\right|\leq \|c_{\gamma^\ast}\|_\infty \epsilon\sum_{\gamma^\ast\in\Gamma^\ast}|\hat\psi_2(\epsilon  \gamma^\ast)|.$$

\begin{claim}
  There is a constant $C=C(\psi_2)$ such that   $$\epsilon\sum_{\gamma^\ast\in\Gamma^\ast}|\hat\psi_2(\epsilon  \gamma^\ast)|\leq C,\quad 0<\epsilon<1.$$
\end{claim}

This can be easily deduced from the assumptions that $\Gamma^\ast$ is separated and $\hat\psi_2\in S(\R)$.

Finally, we see that $|b_0|\leq C\epsilon,$ for some positive constant $C.$
 Since $\epsilon$ can be chosen arbitrarily small, we conclude that $b_0=0.$   The claim follows from \eqref{ab0}.
\end{proof}

We now complete the proof of Step 7.
Similarly to \eqref{eqe}, we have $\langle\mu,\hat\phi\rangle=a_k\phi(t_k)$, for every point $t_k\in \Zer(\hat\Ff)$ and every Schwartz function $\phi$ which vanishes outside a small interval around $t_k$ which does not intersect $\Zer(\hat\Ff).$ 

Choose any function $\phi\in S(\R)$ with compact support. For every $t_s\in\,\Zer\,(\hat\Ff)$, choose any numbers $0<\rho_s<\sigma_s$  such that $[t_s-\sigma_s,t_s+\sigma_s]\,\cap\,\Zer\,(\hat\Ff)=\emptyset$. Choose also a function $\phi_s\in S(\R)$ satisfying $\phi=\phi_s$ on $[t_s-\rho_s,t_s+\rho_s]$ and $\phi_s=0$ outside $(t_s-\sigma_s,t_s+\sigma_s).$ Since $\phi$ has compact support, only a finite number of these functions are non-trivial. By Claims \ref{c1} and \ref{c3},
$$\langle\mu,\hat\phi\rangle=\sum_{s\in \Zer(\hat\Ff)}\langle\mu,\hat\phi_s\rangle=\sum_{s\in \Zer(\hat\Ff)}a_s\phi(s)=\langle\hat\mu,\phi\rangle.$$ This means that $\hat\mu$ is an atomic measure concentrated on the set $\Zer\,\hat\Ff$.
\end{proof}

\section{Acknowledgements}
The authors are indebted to K.~Gr\"{o}chenig for providing a short proof of Theorem~\ref{t1} and other valuable suggestions. They also thank A.~Kulikov for several helpful comments that improved the manuscript.

 \end{document}